\documentclass[11pt,reqno]{amsart}
\usepackage[utf8]{inputenc}
\usepackage[margin=1in]{geometry}
\usepackage{mathpazo}

\usepackage{amsmath}
\usepackage{graphicx}
\usepackage[colorlinks=true, allcolors=blue]{hyperref}
\usepackage{xcolor}
\usepackage{tikz,tikz-cd}
\usetikzlibrary{quotes}

\newcommand{\calM}{\mathcal{M}}
\newcommand{\calC}{\mathcal{C}}

\newcommand{\C}{\mathbb C}

\newcommand{\Z}{\mathbb Z}

\newtheorem{thm}{Theorem}[section]

\newtheorem{lem}[thm]{Lemma}

\newtheorem{cor}[thm]{Corollary}

\newtheorem{prop}[thm]{Proposition}

\theoremstyle{definition} 

\title{Complete Families of Curves in the Moduli Space of Genus g Curves}

\author[Stewart]{Chloe Stewart}
\address{Department of Mathematics, Colorado State University\\ 
Fort Collins, CO 80523, USA}
\email{chloe.stewart@colostate.edu}

\begin{document}

\begin{abstract}
Let $\calM_g$ be the moduli space of smooth curves of genus $g$.  The image of a non-constant morphism from a curve $T$ to $\calM_g$ is a curve in $\calM_g$.  By work of Gonz\'alez D\'iez and Harvey, for every integer $g \geq 3$, there exists a complete curve in $\calM_g$.  Here we generalize the construction to produce new complete curves in $\calM_g$.  We also find a formula for the genus of each curve $T$ using Galois theory for function fields. 

MSC20 classifications: primary 11S15, 14H10, 14H30; secondary 11R32, 14D20, 14H37 \\

Keywords: curve, automorphism, moduli space, cover, complete, branch point

\end{abstract}

\maketitle

\section{Introduction}\label{sect:intro}

The points of $\calM_g$ represent isomorphism classes of smooth, projective, and connected genus $g$ curves.  A curve in $\calM_g$ is the image of a non-constant morphism $T \longrightarrow \calM_g$, where T is a curve.  Let $\calC$ be the image of $T$ in $\calM_g$.  Then $\calC$ is projective (or complete) if $T$ is projective.  To define a complete curve in $\calM_g$ we can define a non-isotrivial family of genus $g$ curves over a projective curve such that every fiber is smooth.

If $g \geq 3$ then many complete curves exist in $\calM_g$.  One can abstractly construct complete curves in $\calM_g$ by intersecting hypersurfaces \cite{Harris1983}. The first examples of complete curves were constructed by Gonz\'alez D\'iez and Harvey \cite{GonDiezHarvey1991}, \cite{GonDiezHarveyII}. The first paper contains an explicit construction for a complete curve for each $g>3$.  The second paper handles the $g=3$ case, but is not as explicit and uses a different strategy. Later, Zaal returned to the $g=3$ case and found a more explicit construction \cite{Zaal1995}.

For $g>3$, the construction in \cite{GonDiezHarvey1991} amounts to specifying a family of genus $g$ curves.  The family is given by varying the branch points of a double cover of a fixed genus $2$ curve.  In this paper, we generalize this work by generating families where each curve is a double cover of a fixed genus $\overline{g}\geq 2$ curve.  Let $\calM_{g,\overline{g}} \subset \calM_g$ denote the sublocus whose points represent isomorphism classes of smooth genus $g$ curves which are double covers of a genus $\overline{g}$ curve.  

\begin{thm} [\cite{GonDiezHarvey1991}]
\label{GDHConstruction}
For every $g > 3$, there exists a complete curve in $\calM_{g,2}$.
\end{thm}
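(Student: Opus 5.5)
The plan is to read $\calM_{g,2}$ as the paper defines it, namely the locus of genus $g$ curves that are double covers of a genus $2$ curve (the ``$2$'' is $\overline{g}$, not a number of marked points). I will then exhibit a non-isotrivial family of smooth curves over a projective base $T$ in which every fiber is a double cover of one \emph{fixed} genus $2$ curve $X$. The image of $T$ is then a complete curve contained in $\calM_{g,2}$.

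First, the numerology. By Riemann--Hurwitz, a double cover $C\to X$ branched at $r$ points has $2g-2 = 2(2\cdot 2-2)+r$, so $g = 3 + r/2$. Since $r$ must be even, every $g>3$ is realized by choosing $r = 2(g-3)\geq 2$; this is exactly where the hypothesis $g>3$ enters. A double cover of $X$ branched along a reduced divisor $B$ of degree $r$ is determined by a line bundle $L$ with $L^{\otimes 2}\cong \mathcal{O}_X(B)$: one takes $C=\operatorname{Spec}(\mathcal{O}_X\oplus L^{-1})$, and $C$ is smooth precisely when $B$ has $r$ distinct points.

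Second, I construct a moving branch divisor. I need a smooth projective curve $T$ with nonconstant morphisms $f_1,\dots,f_r\colon T\to X$ whose graphs in $T\times X$ are pairwise disjoint. Then $B_t=\sum_i f_i(t)$ is reduced for every $t$, and the divisor $\mathcal{B}=\sum \Gamma_{f_i}\subset T\times X$ is smooth. To get a global square root of $\mathcal{O}(\mathcal{B})$, I will pass to a finite cover $T'\to T$. The obstruction to a square root over $T'\times X$ is controlled by a $2$-torsion class, because multiplication by $2$ on $\operatorname{Pic}^0(X)$ is finite étale. After this base change (and after also adjusting by a pullback from $T'$ to fix the parity of degrees on the $T'$ direction), I obtain $\mathcal{L}$ with $\mathcal{L}^{2}\cong\mathcal{O}(\mathcal{B})$. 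The double cover $\mathcal{C}\to T'\times X$ branched along $\mathcal{B}$ is then a smooth family of genus $g$ curves over the projective curve $T'$, and each fiber is a double cover of $X$, so every fiber lies in $\calM_{g,2}$.

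Third, I show the family is non-isotrivial. Suppose all fibers $C_t$ were isomorphic. The genus $2$ quotient $C_t\to X$ comes from an involution, and $\operatorname{Aut}(C_t)$ is finite, so $C_t$ has only finitely many such quotient maps. Composing with the finitely many automorphisms of $X$, there are finitely many possible branch divisors on $X$. A constant curve $C$ would therefore admit only finitely many branch loci, whereas $B_t$ varies with $t$ because the $f_i$ are nonconstant. Hence the induced morphism $T'\to\calM_g$ is nonconstant, and its image is a complete curve in $\calM_{g,2}$.

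The main obstacle is the second step: producing $T$ and maps $f_i$ with pairwise disjoint graphs. The naive choices all fail. A single map composed with an automorphism of $X$ does not work, since automorphisms of a genus $2$ curve have fixed points (for instance the hyperelliptic involution fixes the six Weierstrass points), so $x$ and $\iota(x)$ collide there. I would try first a Kodaira--Atiyah-type construction. Take an étale cover $\pi\colon Y\to X$ with a fixed-point-free deck transformation $\tau$, and use maps of the form $t\mapsto\pi(t)$ together with maps that factor through a different quotient of $Y$, arranged so that coincidences are excluded by the étale condition. For $r>2$ I would repeat this with several non-conjugate deck data, or take fibre products, to obtain $r$ maps with pairwise disjoint graphs. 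Checking the disjointness carefully is the delicate point, and it is what the explicit construction of Gonz\'alez D\'iez and Harvey must supply.
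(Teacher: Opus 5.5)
You have the right outline, and your numerology $r=2(g-3)$, the base change giving a square root of $\mathcal{O}(\mathcal{B})$, and the non-isotriviality argument are all sound. Your base-change step is in fact more careful than the paper, which applies Lemma~\ref{constZ} fiber by fiber.

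The gap is your second step, which is the real content of the theorem. You never construct a projective curve $T$ with $r$ nonconstant maps $f_i\colon T\to X$ whose graphs are pairwise disjoint. The Kodaira--Atiyah sketch is not a construction. For an \'etale $\pi\colon Y\to X$ one has $\pi\circ\tau=\pi$ for every deck transformation $\tau$. So you would need genuinely different maps $\pi_1,\dots,\pi_r\colon Y\to X$ with $\pi_i(y)\neq\pi_j(y)$ for all $y$ and all $i\neq j$. You neither produce these nor explain why such a configuration should exist for every $r$. Without disjointness, $B_t$ fails to be reduced at some $t$, that fiber is singular, and you do not get a complete curve in $\calM_g$. (Your stated reason for discarding automorphisms of $X$ is also wrong: $(x,y)\mapsto(\zeta_3x,-y)$ on $y^2=x^6-1$ is fixed-point-free. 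The conclusion still holds, because $\mathrm{Aut}(X)$ is finite while $r$ is unbounded.)

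The missing idea is exactly what the paper uses; Theorem~\ref{GDHConstruction} is the case $\overline{g}=2$ of Theorem~\ref{generalconst}. Take $X$ to be a genus $2$ curve with a double cover $\rho\colon X\to E$ of an elliptic curve (Lemma~\ref{eccover} with $d=1$). Then borrow fixed-point-freeness from translations on $E$:
\begin{itemize}
\item Fix distinct $Q_1=\mathcal{O},Q_2,\dots,Q_r\in E$.
\item Let $T$ be (a component of) $\{(x_1,\dots,x_r)\in X^r \mid \rho(x_j)=\rho(x_1)+Q_j\}$. It is finite over $W\cong E$, hence projective.
\item The projections $f_j(x_1,\dots,x_r)=x_j$ are nonconstant.
\item Their graphs are pairwise disjoint, because $\rho(x_j)-\rho(x_l)=Q_j-Q_l\neq\mathcal{O}$ for $j\neq l$.
\end{itemize}
With this $T$ in place of your second step, the rest of your argument goes through unchanged.
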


We prove the following theorem in Section \ref{sect:genconstruction}.  We work over an algebraically closed field denoted by $k$ where $\mathrm{char}(k) \not = 2$.

\begin{thm} \label{generalconst}
For every $\overline{g} \geq 2$ and $g> 2\overline{g}-1$, there exists a complete curve $\calC$ in $\calM_{g,\overline{g}}$ over $k$.
\end{thm}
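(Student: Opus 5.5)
Following the Gonz\'alez D\'iez--Harvey strategy of Theorem \ref{GDHConstruction}, I would fix a smooth projective curve $X$ of genus $\overline{g}\geq 2$ over $k$ and construct a family of double covers of $X$ whose branch divisor varies with a parameter on a projective curve $T$. By Riemann--Hurwitz, a double cover $C\to X$ branched at $2r$ points has genus $g = 2\overline{g}-1+r$. The hypothesis $g>2\overline{g}-1$ is therefore exactly the condition $r\geq 1$, and every such $g$ corresponds to a positive number $2r$ of branch points.

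Step one is to produce a projective base $T$ and a morphism $T\to X^{(2r)}$ (or to $X^{2r}$) whose image avoids the big diagonal, so that the branch points stay distinct. Step two is to make sure the branch divisor $B_t$ is divisible by $2$ in $\mathrm{Pic}(X)$ consistently in families, since we need a line bundle $L_t$ with $L_t^{2}\cong \mathcal{O}(B_t)$.

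To carry out step one, I would take $T_0 = X\setminus\{\text{finitely many points}\}$ for the varying point and use the GDH device of an unramified cover. Take an \'etale cover $\phi: Y\to X$ of large degree with deck group $G$; $Y$ exists since $\pi_1(X)$ is large as $\overline{g}\geq 2$. Let $T=Y$ and send $y\in Y$ to the divisor $\sum_{i=1}^{2r} (\psi_i(y))$ on $Y$ (or its image on $X$), where the $\psi_i\in G$ are distinct deck transformations. Because $G$ acts freely, the points $\psi_i(y)$ are pairwise distinct for every $y$, so no collisions occur and $T$ is projective. It is cleaner to do the whole construction over the fixed curve $Y\times Y\to Y$: form the divisor $\Delta_i=\mathrm{graph}(\psi_i)$ in $Y\times Y$, which are pairwise disjoint. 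I would then take $D=\sum_{i=1}^{2r}\Delta_i$ and seek a line bundle $L$ on $S=Y\times Y$ with $L^{2}\cong\mathcal{O}(D)$. If this divisibility fails, I would pass to a further finite cover of the base $T$, pulling back the family, to kill the obstruction, which lies in $H^2$ modulo $2$ and is torsion in the relevant sense. Alternatively I would pass to an \'etale double cover of the fiber direction. The double cover $\mathcal{C}\to S$ branched along $D$ is then a smooth family over $T=Y$. Its fibers are double covers of $Y$, not of $X$. To land in $\calM_{g,\overline{g}}$ I would instead keep the target fixed as $X$: use $T=Y$ and the divisor on $T\times X$ given by the union of the graphs of the maps $\phi\circ\psi_i\circ$ (sheet maps). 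Equivalently, I would choose $Y\to X$ Galois and $2r$ distinct elements of $G$, and take the graphs $\{(y,\phi(\psi_i y))\}$ in $Y\times X$. These are disjoint only when the $\phi\psi_i$ differ, which requires the $\psi_i$ to be chosen in distinct classes, so I would take $\phi$ to be an \'etale cover whose fibers have at least $2r$ sheets, using maps $f_i:Y\to X$ with pairwise disjoint graphs.

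Step three is non-isotriviality: the branch locus varies with $t$, and since $\mathrm{Aut}(X)$ is finite and double covers are determined by their branch data up to finitely many choices, the moduli map is nonconstant. The fibers have genus $g$ and are double covers of $X$, so the image is a complete curve in $\calM_{g,\overline{g}}$. When $\mathrm{char}(k)\neq 2$ the covers are tame, and everything above works over $k$.

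I expect the main obstacle to be the divisibility by $2$ of $\mathcal{O}(D)$ on the surface. Here I would follow GDH and arrange the construction so that $D$ is a pullback of a class that is visibly even, for instance by building the $f_i$ from a cover $Y\to X$ of degree at least $2r$ and then pulling back along multiplication-by-$2$ on the Jacobian (an \'etale cover of $Y$), which halves the relevant Picard class.
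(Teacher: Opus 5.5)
There is a genuine gap. The overall shape of your outline is right: vary the branch divisor of a double cover of a fixed genus-$\overline{g}$ curve over a projective base, then argue non-isotriviality. But the step that carries the content is never achieved. You never produce a projective curve $T$ with $2r$ maps $f_i\colon T\to X$ whose graphs are pairwise disjoint.

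The concrete device you propose fails. If the $\psi_i$ are deck transformations of $\phi\colon Y\to X$, then $\phi\circ\psi_i=\phi$ for every $i$. So all the graphs $\{(y,\phi(\psi_i y))\}$ coincide with the graph of $\phi$, and your $2r$ branch points on $X$ collapse to the single point $\phi(y)$. There are no ``distinct classes'' among deck transformations to choose from.

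Taking the $\psi_i$ from a larger group $G'\supseteq G$ acting freely on $Y$ does separate the maps $\phi\circ\psi$ for $\psi$ in distinct cosets $G\psi$. However, there are only $[G':G]$ of them. This index is the degree of the \'etale map $X=Y/G\to Y/G'$, so it divides $\overline{g}-1$ by Riemann--Hurwitz. Since $2r$ must grow with $g$, this cannot work. Your closing clause ``using maps $f_i\colon Y\to X$ with pairwise disjoint graphs'' therefore just restates what has to be constructed.

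The missing idea, which is what the paper uses, is to choose $X$ admitting a finite map $\rho\colon X\to E$ onto an elliptic curve, and to exploit that nonzero translations of $E$ have no fixed points. For example, take a double cover of $E$ branched at $2(\overline{g}-1)$ points. Fix distinct $Q_1=\mathcal{O},Q_2,\dots,Q_{2r}\in E$ (the paper's $r$ is your $2r$) and set $T=\{(x_1,\dots,x_{2r})\in X^{2r} : \rho(x_j)=\rho(x_1)+Q_j\}$. This is a finite cover of $W\cong E$, hence projective. The coordinate projections $f_j\colon T\to X$ have pairwise disjoint graphs, because $\rho\circ f_j$ and $\rho\circ f_l$ differ by translation by $Q_j-Q_l\neq\mathcal{O}$.

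By contrast, the divisibility by $2$ that you single out as the main obstacle is routine. The square roots of $\mathcal{O}_X(D_t)$ form a finite \'etale $J(X)[2]$-torsor over $T$, since $\mathrm{char}(k)\neq 2$. After a finite base change of $T$, which keeps it projective, the double covers fit into a global family, and your $[2]$-pullback trick accomplishes the same thing. Your non-isotriviality argument is fine once the family exists.
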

 
In Sections \ref{sect:genusT} and \ref{sect:altconst}, we investigate the geometry of the curves arising from this construction.  After making a choice of the base curve and imposing a simple condition, we calculate the genus of every $T$ using the Riemann Hurwitz formula.

This work is connected to an important open question pertaining to $\calM_g$.  The maximal dimension for a complete subvariety in $\calM_g$ for $g \geq 4$ is unknown.  This question is interesting because the maximal dimension of a complete subvariety in a moduli space is a measure of how close the moduli space is to being projective or affine.  For $g= 1,2$ the only complete subvarieties are points, and $ \calM_1$ and $\calM_2$ are affine.  In $\calM_3$, the maximal dimension of a complete subvariety is $1$.  However, it is unknown if a $2$-dimensional complete subvariety exists in $\calM_4$.  There are several results bounding the maximal dimension of a complete subvariety of $\calM_g$.  Diaz proved, in $\mathrm{char}(k)=0$, that if $Z \subset \calM_g$ is a complete subvariety then $\mathrm{dim}(Z)\leq g-2$ \cite{Diaz1984}.  Later Looijenga proved this upper bound in general characteristic \cite{Looijenga1995}.  Zaal proved a lower bound in Section 2.2 of \cite{Zaal2005}: he proves the maximum dimension of a complete subvariety in $\calM_g$ must be greater than $\log_2(g)-2$. More recent work by Choi improves this lower bound in positive characteristic \cite{Choi2023}.  In this paper we gain a greater understanding of explicit constructions of complete curves in $\calM_g$ which may aid in constructing higher dimensional complete subvarieties.  

There is also related ongoing work defining (non-complete) families of curves with automorphisms.  As in this paper, these families define subvarieties of $\calM_g$.  For the work most similar to this paper see \cite{Hidalgo2024}, \cite{Shaska2006}, and \cite{Shaska2004}.

 \subsection*{Acknowledgments}  I would like to thank my advisor Dr.\ Rachel Pries for her expertise and guidance on this project.

\section{Complete Curve Construction} \label{sect:genconstruction}

A curve over $k$ is a smooth, projective, connected, variety of dimension $1$ over $k$.  In this section, we consider all curves to be defined over $k = \overline{k}$ where $\mathrm{char}(k) \not = 2$.  Let $X$ be a curve over $k$ with genus $\overline{g}$.  Suppose $\rho: X \longrightarrow E$ is a cover of an elliptic curve.  Given $r \geq 2$, let $\Delta_X$ be the strong diagonal of $X^r$.

\begin{lem} \label{constZ}
    Given $\overline{g}$ and $g$ such that $g> 2\overline{g}-1$, let $r=2(g+1-2\overline{g})$.  Suppose $X$ is a curve of genus $\overline{g}$ and $\tau = (x_1,\ldots,x_r ) \in X^r-\Delta_X$.  Then there exists a degree $2$ cover $ \phi: Z \longrightarrow X$ branched at $\tau$ where $Z$ is smooth and has genus $g$.
\end{lem}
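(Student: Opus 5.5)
We build $Z$ from a square root of the branch divisor in the Picard group of $X$. Set $B = x_1 + \cdots + x_r$, an effective reduced divisor of degree $r$ on $X$; it is reduced because $\tau \notin \Delta_X$, so the $x_i$ are pairwise distinct. Since $r = 2(g+1-2\overline{g})$ is even, $\deg B$ is even. The group $\mathrm{Pic}^0(X)$ is divisible because $k$ is algebraically closed, and $\mathrm{Pic}^{\deg}(X)$ is nonempty in every degree. Hence there is a line bundle $L$ on $X$ with $L^{\otimes 2} \cong \mathcal{O}_X(B)$: pick any $L_0$ of degree $r/2$, then correct $L_0^{\otimes 2}\otimes \mathcal{O}_X(-B)\in \mathrm{Pic}^0(X)$ by a square root of its inverse.

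Let $s$ be a section of $\mathcal{O}_X(B)$ with divisor exactly $B$. We define $Z$ as the cyclic double cover $\mathrm{Spec}_X(\mathcal{O}_X \oplus L^{-1})$, with algebra structure given by $L^{-2} \cong \mathcal{O}_X(-B) \xrightarrow{s} \mathcal{O}_X$. Equivalently, $Z$ sits inside the total space of $L$ as the locus $\{t^2 = s\}$. We then check three properties.

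\emph{Smoothness.} Locally $s = u f$, with $u$ a unit and $f$ either a unit or a uniformizer at some $x_i$, because $B$ is reduced. The equation $t^2 = uf$ defines a smooth curve because $\mathrm{char}(k)\neq 2$: off $B$ the map is étale, and at each $x_i$ the function $t$ is a uniformizer.

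\emph{Branching.} The map $\phi$ is finite of degree $2$ and flat. It is branched exactly over the zeros of $s$, namely at $\tau$, and each $x_i$ has a single ramified preimage. Again $\mathrm{char}(k)\neq 2$ guarantees that the ramification is tame and that no other points ramify.

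\emph{Connectedness.} If $Z$ were disconnected, it would be two copies of $X$ and $\phi$ would be unramified. But $r \ge 2$ because $g > 2\overline{g}-1$, so $\phi$ is ramified, and $Z$ is irreducible.

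Finally, since each ramification is tame with index $2$, Riemann--Hurwitz gives
\[
2g(Z) - 2 = 2(2\overline{g}-2) + r = 4\overline{g} - 4 + 2(g+1-2\overline{g}) = 2g - 2,
\]
so $g(Z) = g$. As a cross-check, $g(Z) = \overline{g} + h^0(K_X\otimes L) = \overline{g} + (\overline{g}-1 + r/2)$ agrees.

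The step that needs real input is the existence of the square root $L$: for a double cover, the branch divisor must be even in $\mathrm{Pic}(X)$. Divisibility of $\mathrm{Pic}^0$ supplies this, and it has $2^{2\overline{g}}$ choices, any of which works. The rest is routine local analysis, which uses $\mathrm{char}(k)\neq 2$.
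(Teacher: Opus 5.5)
Your proof is correct, and it takes a different route from the paper.

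\textbf{The paper's route.} It reduces to $k=\C$ and works topologically. Double covers branched in $\tau$ correspond to homomorphisms $\pi_1(X-\tau)\to\Z/2\Z$. Sending every loop $l_i$ to the generator respects the relation $l_1\cdots l_r\prod[\alpha_i,\beta_i]=1$ exactly because $r$ is even. Riemann--Hurwitz then gives the genus.

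\textbf{Your route.} You use the algebraic dictionary between double covers and pairs $(L,s)$ with $L^{\otimes 2}\cong\mathcal{O}_X(B)$. The parity of $r$ reappears as the requirement that $\deg\mathcal{O}_X(B)$ be even. Divisibility of $\mathrm{Pic}^0(X)$ replaces the group-theoretic construction. Your $2^{2\overline{g}}$ choices of $L$ match the paper's free choice of images for the $\alpha_i,\beta_i$.

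\textbf{What each buys.} Your argument works directly over any algebraically closed $k$ with $\mathrm{char}(k)\neq 2$. Its only non-formal input is surjectivity of multiplication by $2$ on the Jacobian. The paper's sentence ``it suffices to prove the result over $\C$'' silently relies on the Riemann existence theorem. It also needs, in positive characteristic, Grothendieck's comparison of prime-to-$p$ (tame) fundamental groups with lifting to characteristic $0$, or the Lefschetz principle in characteristic $0$; you need none of this. You also check smoothness and connectedness of $Z$ explicitly. The paper leaves these implicit; connectedness there follows from the homomorphism being surjective. Projectivity of $Z$ is immediate in your setting, since $Z$ is finite over $X$, though you did not state it. In exchange, the paper's approach is shorter and classifies all double covers branched in $\tau$ at once.
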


\begin{proof}
Recall $X$ is a smooth projective curve over $k$.  Since $k$ is algebraically closed and $\mathrm{char}(k)$ is relatively prime to $\# \Z / 2 \Z$ it suffices to prove the result over $\C$.

Then since $\tau$ is a finite set of points, isomorphism classes of double covers with branch locus in $\tau$ are in bijection with homomorphisms from the topological fundamental group $\pi_1(X-\tau)$ to $S_2$.

The fundamental group of $X-\tau$ is 
$$\pi_1(X-\tau) = \langle l_1,l_2,\ldots,l_r,\alpha_1,\ldots,\alpha_{\overline{g}},\beta_1,\ldots,\beta_{\overline{g}} \rangle/(l_1l_2\ldots l_r)\prod[\alpha_i,\beta_i].$$ Here $l_i$ is a generator for the loop around $x_i$ and $\alpha_i$ and $\beta_i$ are the standard generators of $\pi_1(X)$.  

Define a map from $\pi_1(X-\tau)$ to $S_2 = \Z/2\Z$.  Let the map take each $l_i$ to the non-trivial element of $\Z/2\Z$ and the generators $\alpha_i$ and $\beta_i$ can be mapped anywhere.  Choose one mapping without loss of generality.  Note, $r$ is even so the map is a homomorphism.

Let the corresponding cover be $\phi: Z \longrightarrow X$.  Since every $l_i$ is mapped to a non-trivial element, the cover $\phi: Z \longrightarrow X$ is branched at every point in $\tau$.  We use the Riemann-Hurwitz formula to verify $Z$ has genus $g$.  Let $e_z$ be the ramification index for $z \in Z$.  There are $r$ branch points, each with ramification index $2$, so $\sum (e_z - 1) = r$. So $ G_Z = g $, because

\begin{center}
$ 2G_Z -2 = (d)(2\overline{g} - 2) + \sum (e_z -1). $
\end{center}
\end{proof}

\begin{lem} \label{eccover}

For every $\overline{g} \geq 2$, there exists a cover $\rho: X \longrightarrow E$ of smooth curves such that $X$ has genus $\overline{g}$ and $E$ is an elliptic curve.

\end{lem}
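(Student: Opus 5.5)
The plan is to build $X$ directly as a double cover of a fixed elliptic curve, using the same branched-cover construction as in the proof of Lemma \ref{constZ}, with $E$ in the role of the base curve. Fix any elliptic curve $E$ over $k$; it has genus $1$. By Riemann--Hurwitz, a degree $2$ cover $\rho: X \to E$ with $s$ simple branch points satisfies
$$2\overline{g}-2 = 2(2\cdot 1-2) + s = s,$$
so we need exactly $s = 2\overline{g}-2$ branch points. Since $\overline{g}\geq 2$, this gives $s\geq 2$, and $s$ is even, which is exactly the parity needed.

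The steps are as follows. First, choose $2\overline{g}-2$ distinct points $y_1,\ldots,y_s \in E$; this is possible because $E$ has infinitely many points. Second, as in Lemma \ref{constZ}, reduce to $k=\C$, using that $\mathrm{char}(k)\neq 2$ so that tame degree $2$ covers behave as over $\C$. Then use the presentation
$$\pi_1(E-\{y_1,\ldots,y_s\}) = \langle l_1,\ldots,l_s,\alpha,\beta\rangle/(l_1\cdots l_s[\alpha,\beta]).$$
Third, define the homomorphism to $\Z/2\Z$ sending each $l_i$ to the nontrivial element and $\alpha,\beta$ to $0$. It respects the relation because $s$ is even and the commutator maps to $0$ in an abelian group.

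Fourth, check that the corresponding cover is what we want. The map is surjective, since $s\geq 1$ and each $l_i$ hits the generator, so the cover is connected. Its normalization/compactification $X$ is a smooth projective curve. It is branched exactly at the $y_i$ with ramification index $2$. The Riemann--Hurwitz computation above then gives genus $\overline{g}$.

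Alternatively, one can avoid topology with an explicit function-field description. Take $E: y^2=f(x)$ with $f$ a cubic, and let $k(X)=k(E)(\sqrt{h})$ for a function $h$ on $E$ whose divisor has odd multiplicity exactly at $s$ points. For instance, $h$ could be a function with simple zeros at $y_1,\ldots,y_s$ and a pole of even order, which exists after adjusting so that the divisor sums to zero in the group law, then using Riemann--Roch. The main obstacle in this version is only guaranteeing connectedness (that $h$ is not a square) and the exact branch locus. In the topological approach this is handled by the surjectivity of the monodromy map, so I would present the topological argument, mirroring Lemma \ref{constZ}.
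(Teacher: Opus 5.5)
Your proposal is correct and follows essentially the same route as the paper's proof. The paper takes a double cover of $E$ branched at $2d$ points, sending each $l_i$ to the nontrivial element of $\Z/2\Z$ and $\alpha,\beta$ to $0$, and uses Riemann--Hurwitz to get genus $d+1$; your choice $s=2\overline{g}-2$ is exactly $d=\overline{g}-1$, and your explicit check that the monodromy is surjective (so $X$ is connected) fills in a point the paper leaves implicit.
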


\begin{proof}
    We proceed similarly as above.  Let $d$ be an integer such that $d \geq 1$.  Let $\Delta_E$ be the strong diagonal of $E^{2d}$.  Define $B = ( P_1, \ldots ,P_{2d} ) \in E^{2d} - \Delta_E$.  The fundamental group of $E-B$ is
    $$\pi_1(E-B) = \langle l_1,l_2, \ldots ,l_{2d},\alpha,\beta \rangle/(l_1l_2 \ldots l_{2d})[\alpha,\beta].$$
    Here $l_i$ is the loop around $P_i$ and $\alpha$ and $\beta$ are the generators of $\pi_1(E)$.

    As before, define $\psi: \pi_1(E-B) \longrightarrow \Z/2\Z$ such that $\psi (l_i) = 1$ and $\psi (\alpha) = \psi(\beta) = 0$.  Then the corresponding degree two cover $\rho: X \longrightarrow E$ of smooth curves is branched at the points of $B$.  

    Using the Riemann Hurwitz formula we find the genus of $X$.  Similarly to the previous proof, $\sum (e_x -1 )= 2d$.  We obtain $ G_X = d+1 $.

Since $d \geq 1$, by varying the number of branch points, we can produce at least one $\rho: X \longrightarrow E$ where $X$ has genus $\overline{g}$ for all $\overline{g} \geq 2$.
\end{proof}

For a similar construction and a discussion of the Hurwitz space which parametrizes degree $d$ genus $\overline{g}$ connected covers of an elliptic curve, see \cite{Chen2010}.

Now we prove Theorem \ref{generalconst}.

\begin{proof}[Proof of Theorem \ref{generalconst}]
By Lemma \ref{eccover}, for every $\overline{g} \geq 2$, there exists a cover $\rho: X \longrightarrow E$ such that the genus of $X$ is $\overline{g}$. Then as in Lemma \ref{constZ}, let $r=2(g+1-2\overline{g})$.  Define $+$ to be the group action on the points of $E$.  Fix distinct points $Q_1=\mathcal{O}, Q_2, \ldots , Q_r \in E$ and let $x_i \in X$.  Define the following cover:
\begin{equation} \label{ToverWgen}
\begin{tikzcd}
 T:= \{ (x_1, \ldots , x_r) \mid \rho (x_j) = \rho(x_1) + Q_j \} \dar{\rho^r} \\
 W:= \{ (P, P + Q_2, \ldots  P + Q_r) \mid P \in E \},
 \end{tikzcd}
\end{equation}

where $\rho^r((x_1,\ldots ,x_r))= (\rho(x_1),\ldots ,\rho(x_r))$. Note that $W \cong E$ because $Q_1,\ldots ,Q_r$ are fixed.  So, $W$ is a smooth projective curve.  Then $T$ is a projective curve because it is a finite cover of a projective curve.  

For all $(P,P+Q_1,\ldots ,P+Q_r) \in W$ the entries are distinct.  Then $T \subset X^r - \Delta_{X}$ because if $\rho^{-1}(P + Q_j)$ intersects $ \rho^{-1}(P + Q_l)$ for some $l \not = j$ then $P+Q_j = P+ Q_l$, which is a contradiction.  In other words, $\tau \in T$ has no repeated entries.

Apply Lemma \ref{constZ} to every $\tau \in T$ to obtain a smooth genus $g$ curve $Z$ branched at $\tau$ for every choice of $\tau \in T$.  Recall the lemma holds for $g$ and $\overline{g}$ such that $g+1-2\overline{g}>0$.  This family of curves depends only on the parameter $P$.

To verify this family of curves is non-isotrivial, we note that there are finitely many involutions in the automorphism group of each curve $Z$. This means there are finitely many double covers from $Z$ to a curve of genus $\overline{g}$ if $\overline{g}\geq2$.  So, at most finitely many $Z$'s are isomorphic to each other and the family is not isotrivial.

We have defined a complete $1$-parameter family of smooth genus $g$ curves which is not isotrivial where each curve is a double cover of a genus $\overline{g}$ curve when $\overline{g} \geq 2$ and $ g \geq 2 \overline{g} -1$.  Then the image of $T \longrightarrow \calM_{g,\overline{g}}$, $\mathcal{C}$ is a complete curve.

\end{proof}

\section{An Explicit Family of Complete Curves} \label{sect:Xmconst}
In this and subsequent sections, suppose $\mathrm{char}(k)=0$.  Recall that the points of $\calM_{g,\overline{g}} \subset \calM_g$ represent isomorphism classes of genus $g$ curves which are degree $2$ covers of a genus $\overline{g}$ curve.

We can specialize the construction in Section \ref{sect:genconstruction} to a particular choice of cover $\rho: X \longrightarrow E$ to obtain an explicit complete curve in $\calM_{g,\overline{g}}$.  Let $E$ be the elliptic curve with affine equation $y_1^2=x_1^3-1$.  For $m>1$, let $X_m$ be the smooth projective curve with affine equation 
\begin{equation} \label{Xm}
y_2^2=x_2^{3m}-1.
\end{equation}
Then there is a cover of curves $\rho_m: X_m \longrightarrow E$, which takes $(x_2,y_2) \mapsto (x_2^m,y_2)$. 

\begin{lem} \label{ghatmgenusandgal}
        For $m>1$, the cover defined by $\rho_m: X_m \longrightarrow E$ is a Galois cover with Galois group $\mathrm{Gal}(X_m/E) \cong \Z/m\Z$ and the curve $X_m$ has genus  
\begin{equation} \label{gbareqn} \overline{g}= \begin{cases} 
      (3m-1)/(2) & \text{ if } m \text{ odd}. \\
      (3m/2)-1 & \text{ if } m \text{ even}.
   \end{cases}
\end{equation}
\end{lem}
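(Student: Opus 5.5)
We treat the Galois statement and the genus computation separately, using the equations directly.

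\emph{Galois structure.} Let $\zeta$ be a primitive $m$-th root of unity in $k$; it exists since $\mathrm{char}(k)=0$. The map $\sigma:(x_2,y_2)\mapsto(\zeta x_2,y_2)$ preserves the equation $y_2^2=x_2^{3m}-1$, because $(\zeta x_2)^{3m}=x_2^{3m}$. Hence $\sigma$ extends to an automorphism of the smooth projective model $X_m$. It commutes with $\rho_m$, since $(\zeta x_2)^m=x_2^m$, and it has order exactly $m$. On function fields we have $k(X_m)=k(x_2,y_2)$ and $k(E)=k(x_1,y_1)$ with $x_1=x_2^m$ and $y_1=y_2$. Therefore $k(X_m)=k(E)(x_2)$, where $x_2$ is a root of $T^m-x_1$. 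This gives $[k(X_m):k(E)]\le m$. The group $\langle\sigma\rangle$ has order $m$ and fixes $k(E)$, so $k(E)\subseteq k(X_m)^{\langle\sigma\rangle}$, and $[k(X_m):k(X_m)^{\langle\sigma\rangle}]=m$ by Artin's theorem. Combining the two bounds gives equality throughout, so the extension is Galois with group $\langle\sigma\rangle\cong\Z/m\Z$. (This is the Kummer extension obtained by adjoining $x_1^{1/m}$.)

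\emph{Genus.} One route is to observe that $X_m$ is hyperelliptic, defined by $y_2^2=f(x_2)$ with $f=x_2^{3m}-1$ of degree $3m$ and squarefree (its roots are the distinct $3m$-th roots of unity). The standard formula then gives $\overline{g}=\lfloor(3m-1)/2\rfloor$. For $m$ odd, $3m$ is odd and this equals $(3m-1)/2$. For $m$ even it equals $3m/2-1$. This matches \eqref{gbareqn}.

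To stay self-contained, we can instead apply Riemann--Hurwitz to the degree-$2$ map $X_m\to\mathbb{P}^1$ given by $x_2$. It is branched at the $3m$ roots of $f$. It is also branched at $\infty$ exactly when $3m$ is odd, since near $\infty$ we have $y_2^2\sim x_2^{3m}$ and ramification occurs precisely when the pole order is odd. So the number of branch points is $3m+1$ if $m$ is odd and $3m$ if $m$ is even. Riemann--Hurwitz, $2\overline{g}-2=2(-2)+\#\text{branch points}$, then yields the two cases.

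As a consistency check, one could also apply Riemann--Hurwitz to $\rho_m$ itself. Since $x_1=x_2^m$, this cover ramifies only over the points of $E$ with $x_1=0$ (the two points with $y_1=\pm i$) and over $\mathcal{O}$. Over $x_1=0$ each point has a single preimage with full ramification $m$. Over $\mathcal{O}$ the ramification index is $m/\gcd(m,2)$, which is where the parity enters. The result agrees with the hyperelliptic count.

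The main obstacle is the behaviour at infinity, in both approaches. The affine equation is singular at infinity in the projective closure, so we must argue via the smooth model. Concretely, we use local parameters at the points over $\infty$: one point when $3m$ is odd, two when $3m$ is even. This is where the parity split in \eqref{gbareqn} comes from, so this local analysis needs to be done carefully.
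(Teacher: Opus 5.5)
Your proposal is correct and follows essentially the paper's route. The genus comes from the hyperelliptic genus formula for $y_2^2=x_2^{3m}-1$, and the Galois statement from the Kummer extension $k(X_m)=k(E)(x_1^{1/m})$. Your explicit automorphism $\sigma$ combined with Artin's theorem usefully supplies a step the paper leaves implicit, namely that the extension has degree exactly $m$; your Riemann--Hurwitz cross-checks are correct extras.
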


\begin{proof}
The genus follows from the genus formula for hyperelliptic curves.
    
A cover is Galois if and only if the corresponding function field extension is Galois.  The Galois group of the cover matches the Galois group of the associated function field extension.  The function fields arising from $E$ and $X_m$ are $F := \mathrm{Frac}(\C[x_1,y_1]/\langle y_1^2-(x_1^3-1) \rangle)$ and $K : =\mathrm{Frac}(\C[x_2,y_2]/\langle y_2^2 -(x_2^{3m}-1)\rangle )$ respectively.  The field extension $K/F$ is given by adjoining the $m$th root of $x_1$, so the Galois group is $\Z/m\Z$.
\end{proof}

When $m$ is even, $m=2d+2$ for $d \geq 0$.  This implies $\overline{g}=(3m/2)-1 = 3d+2 \equiv 2 \bmod 3$.  Similarly, when $m$ is odd, $\overline{g} \equiv 1 \bmod 3$.

Note that for any abelian cover the ramification indices are equal for every ramification point in the fiber above a branch point.  The cover $\rho_m: X_m \longrightarrow E$ is abelian by Lemma \ref{ghatmgenusandgal}, so we will refer to the ramification index of a branch point (i.e., the ramification index of any point in the fiber).

\begin{cor} \label{rho_mCC}
Fix $m$ and let $\overline{g}$ be as in equation \eqref{gbareqn}.  The choice of $\rho: X_m \longrightarrow E$ defined in Lemma \ref{ghatmgenusandgal} together with a choice of points $Q_2,...,Q_r \in E$ produces an explicit complete curve in $\calM_{g,\overline{g}}$ by defining a family of covers $\phi: Z_\tau \longrightarrow X_m$ branched at $\tau$ as in \eqref{ToverWgen} for all $\overline{g} \geq 2$ such that $\overline{g} \equiv 1, 2 \bmod 3$ and $g > 2\overline{g}-1$.
\end{cor}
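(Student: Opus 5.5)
The corollary is a specialization of the proof of Theorem \ref{generalconst}, so the plan is to plug the explicit cover $\rho_m: X_m \longrightarrow E$ into that construction. The only genuinely new point is that the possible values of $\overline{g}$ are exactly the integers $\geq 2$ that are $\equiv 1,2 \bmod 3$.

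\textbf{Step 1: the admissible genera.} Fix $\overline{g}\geq 2$ with $\overline{g}\equiv 1$ or $2 \bmod 3$. I choose $m>1$ so that Lemma \ref{ghatmgenusandgal} gives $X_m$ genus $\overline{g}$.
\begin{itemize}
\item If $\overline{g}\equiv 2 \bmod 3$, write $\overline{g}=3d+2$ with $d\geq 0$ and take $m=2d+2$. Then $m$ is even and $m\geq 2$, and equation \eqref{gbareqn} gives $3m/2-1=3d+2$.
\item If $\overline{g}\equiv 1 \bmod 3$, then $\overline{g}\geq 4$, so write $\overline{g}=3e+1$ with $e\geq 1$ and take $m=2e+1\geq 3$. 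Then $m$ is odd, and $(3m-1)/2=3e+1$.
\end{itemize}
Conversely, the remark after Lemma \ref{ghatmgenusandgal} shows that every $m>1$ yields such a $\overline{g}$. So the family $\{X_m\}$ realizes exactly these genera.

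\textbf{Step 2: running the general construction.} Given $g>2\overline{g}-1$, set $r=2(g+1-2\overline{g})\geq 2$. Choose distinct points $Q_1=\mathcal{O},Q_2,\ldots,Q_r\in E$, which is possible because $E$ has infinitely many points. Define $T$ and $W$ as in \eqref{ToverWgen}, using $\rho=\rho_m$.

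The proof of Theorem \ref{generalconst} uses only three properties of $\rho$: it is a finite morphism of smooth projective curves, $X$ has genus $\overline{g}$, and the target is an elliptic curve, so that translation by $Q_j$ makes sense. The map $\rho_m$ has all three properties: the target is $E: y_1^2=x_1^3-1$, and $\rho_m$ is a cover by Lemma \ref{ghatmgenusandgal}.

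The argument then goes through verbatim:
\begin{itemize}
\item $W\cong E$, and $T$ is projective as a finite cover of $W$.
\item $T\subset X_m^r-\Delta_{X_m}$, because the points $P+Q_j$ are distinct.
\item Lemma \ref{constZ} applied to each $\tau\in T$ gives a smooth genus $g$ double cover $Z_\tau\longrightarrow X_m$ branched at $\tau$.
\item Non-isotriviality holds because each $Z_\tau$ has only finitely many involutions, so only finitely many $Z_\tau$ lie in any one isomorphism class.
\end{itemize}
The image of $T\to\calM_{g,\overline{g}}$ is therefore a complete curve. It is explicit because $X_m$, $E$ and $\rho_m$ are given by equations.

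\textbf{Expected obstacle.} The main point needing care is the small case in Step 1. For $\overline{g}\equiv 1 \bmod 3$ we must take $m\geq 3$, since $m=1$ is excluded. The resulting genus $\overline{g}=4$ is indeed attained at $m=3$, and $\overline{g}=1$ is excluded by the hypothesis $\overline{g}\geq 2$.

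A secondary point is that Theorem \ref{generalconst} was stated over arbitrary $k$ of characteristic $\neq 2$, whereas here $\mathrm{char}(k)=0$. This only simplifies matters, since Lemma \ref{constZ} already reduces to the case $k=\C$.

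Beyond these two checks, no new argument seems needed. The corollary is a bookkeeping consequence of Lemma \ref{ghatmgenusandgal} combined with the proof of Theorem \ref{generalconst}.
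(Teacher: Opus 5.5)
Your proposal is correct and follows the paper's proof. The paper likewise splits on the parity of $m$, reads off $\overline{g}$ from Lemma \ref{ghatmgenusandgal}, and then runs the construction of Theorem \ref{generalconst} with $\rho=\rho_m$. Your explicit inversion ($\overline{g}=3d+2\mapsto m=2d+2$ and $\overline{g}=3e+1\mapsto m=2e+1$) spells out that every admissible $\overline{g}$ is attained, which the paper leaves to the remark after Lemma \ref{ghatmgenusandgal} and to equation \eqref{mequals}.
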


\begin{proof}
The points $(0,\pm i)$ are branch points of $\rho_m$ and have ramification index $m-1$.  For values of $m>2$, the point at infinity is also a ramification point.

 Case 1. The degree $m$ is even.  Then $X_m$ has two points at infinity and the genus of $X_m$ is $\overline{g} = (3m/2)-1$.  The result follows immediately from Theorem \ref{generalconst} and Lemma \ref{ghatmgenusandgal}.  Notice, we produce a complete curve in $\calM_{g,\overline{g}}$ for all $g > 3m - 3$.
 
Case 2. The degree $m$ is odd.  Then $X_m$ has one point at infinity and the genus of $X_m$ is $\overline{g} = (3m-1)/2$.  Then once again we apply Theorem \ref{generalconst} to produce a complete curve in $\calM_{g,\overline{g}}$ for all $g > 3m - 2$.
\end{proof}

  \begin{lem} \label{ramindexXm}
    Let $e$ be the ramification index of be a branch point $L \in E$ of the cover $\rho: X \longrightarrow E$.  The value of $e$ is
    $$e = \begin{cases}
    m, & \text{ if } L = (0,i) \text{ or } (0,-i); \\ 
    m, & \text{ if } \overline{g} > 2, \hspace{3pt} \overline{g} \equiv 1 \bmod 3,   \text{ and }  L = \mathcal{O}; \\
    m/2, & \text{ if } \overline{g} > 2, \hspace{3pt} \overline{g} \equiv 2 \bmod 3,   \text{ and }  L = \mathcal{O}. \\
    \end{cases}$$
\end{lem}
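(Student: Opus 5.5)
The plan is to compute the ramification of $\rho_m: X_m \to E$, $(x_2,y_2)\mapsto(x_2^m,y_2)$, by working with the function field extension $K/F$ of Lemma \ref{ghatmgenusandgal}. This extension is the Kummer extension $K=F(x_1^{1/m})$ with $x_2^m=x_1$. Since $\mathrm{char}(k)=0$, Kummer theory applies directly. A place $L$ of $F$ has ramification index
$$e_L = \frac{m}{\gcd(m, v_L(x_1))},$$
and it suffices to compute the divisor of $x_1$ on $E$. By Lemma \ref{ghatmgenusandgal} the cover is Galois, so every point in a fiber has the same index.

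The first step is to compute $\mathrm{div}(x_1)$ on $E: y_1^2=x_1^3-1$.
\begin{itemize}
\item The zeros of $x_1$ are the points with $x_1=0$, namely $y_1^2=-1$, so $L=(0,\pm i)$. At each of these $x_1$ is a local uniformizer, because $y_1\neq 0$ there, so the projection to the $x_1$-line is unramified. Hence $v_L(x_1)=1$.
\item The only pole of $x_1$ is $\mathcal{O}$, with $v_{\mathcal{O}}(x_1)=-2$.
\end{itemize}
These two contributions balance, since $\deg\mathrm{div}(x_1)=1+1-2=0$.

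Plugging into the Kummer formula gives $e=m$ at $(0,\pm i)$, for every $m$. At $\mathcal{O}$ it gives $e=m/\gcd(m,2)$:
\begin{itemize}
\item if $m$ is odd, $e=m$;
\item if $m$ is even, $e=m/2$.
\end{itemize}
The two points at infinity found in Case 1 of Corollary \ref{rho_mCC} are consistent with this. A fiber has $m/e=2$ points exactly when $m$ is even.

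Next I translate parity of $m$ into the congruence conditions of the statement, using the remark after Lemma \ref{ghatmgenusandgal}: $m$ odd gives $\overline{g}\equiv 1 \bmod 3$, and $m$ even gives $\overline{g}\equiv 2\bmod 3$. The hypothesis $\overline{g}>2$ excludes $m=2$. There $\overline{g}=2$ and $e=m/2=1$ at $\mathcal{O}$, so $\mathcal{O}$ is not a branch point, which matches the earlier remark that infinity ramifies only for $m>2$. As a consistency check, Riemann--Hurwitz should recover \eqref{gbareqn}:
\begin{itemize}
\item for $m$ odd, $2\overline{g}-2=2(m-1)+(m-1)=3m-3$;
\item for $m$ even, $2\overline{g}-2=2(m-1)+2(m/2-1)=3m-4$.
\end{itemize}
Both agree with \eqref{gbareqn}.

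The main obstacle is justifying the local computation at $\mathcal{O}$, that is, confirming $v_{\mathcal{O}}(x_1)=-2$ and that the Kummer formula applies to a pole. I would handle this with the uniformizer $t=x_1/y_1$ at $\mathcal{O}$, which gives $x_1\sim t^{-2}$. Alternatively, I would check directly on $X_m$ with the standard chart at infinity of the hyperelliptic curve $y_2^2=x_2^{3m}-1$. That curve has one point at infinity when $3m$ is odd and two when $3m$ is even. The valuation of $x_2$ there ($-2$ and $-1$ respectively) compared with $v_{\mathcal{O}}(x_1)=-2$ gives the index $e=m$ or $e=m/2$ directly.
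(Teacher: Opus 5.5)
Your proof is correct, but it takes a different route from the paper.

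\textbf{The paper's route.} It reads the branch locus $\{(0,\pm i),\mathcal{O}\}$ off the explicit map and computes $e$ by counting fibers. Each of $(0,\pm i)$ has a single preimage. The point $\mathcal{O}$ has one or two preimages, according to whether $y_2^2=x_2^{3m}-1$ has one or two points at infinity. Since the cover is Galois, $e=m/\#\rho_m^{-1}(L)$.

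\textbf{Your route.} You apply the Kummer ramification formula $e_L=m/\gcd(m,v_L(x_1))$ to $K=F(x_1^{1/m})$, using $\mathrm{div}(x_1)=(0,i)+(0,-i)-2\mathcal{O}$.

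\textbf{What each buys.} Your approach gives three things at once. It proves that no other point of $E$ is branched, which the paper only asserts. It explains the parity dichotomy as $\gcd(m,2)$, instead of relying on the count of points at infinity, which the paper does not justify. It also transfers verbatim to the cover of Section \ref{sect:altconst}: there $K=F(y_1^{1/m})$ and $\mathrm{div}(y_1)=(1,0)+(\zeta_3,0)+(\zeta_3^2,0)-3\mathcal{O}$, which yields exactly the $\gcd(m,3)$ dichotomy of Lemma \ref{ramindexalt}. The paper's approach is more elementary, needing only fiber counts and the Galois property.

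\textbf{Your flagged obstacle.} The step you call the main obstacle is not one. The Kummer formula holds at every place, poles included. Its only hypothesis is that $x_1$ is not a $d$th power in $F$ for any $d\mid m$ with $d>1$. This follows at once from $v_{(0,i)}(x_1)=1$, or from $[K:F]=m$ in Lemma \ref{ghatmgenusandgal}.
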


\begin{proof}
    Use the definition of the map to conclude that $L \in \{ (0,i),(0,-i), \mathcal{O}\}$.

    Case 1. If $L = (0,i)$ or $(0,-i)$, then $\rho_m^{-1}(0,i) = \{(0,i)\}$ and $\rho_m^{-1}(0,-i) = \{(0,-i)\}$.  The cover is totally ramified so $e = m$.  

    Case 2. If $\overline{g} \equiv 1 \bmod 3,$ and $L = \mathcal{O}$, then $X_m$ with genus $\overline{g}$ has one point at infinity. So, the pre-image of $\mathcal{O}$ is a single point and $e = m$ as in case 1.

    Case 3. If $\overline{g} \equiv 2 \bmod 3,$ and $L = \mathcal{O}$, then $X_m$ with genus $\overline{g}$ has two points at infinity.  Since $\rho_m$ is Galois, each of the points at infinity will have ramification index $m/2$.  Note that $m$ is always even in this case.
\end{proof}

\section{The Geometry of $\calC$} \label{sect:genusT}
  Once we fix a cover $\rho: X \longrightarrow E$ to build a curve in $\calM_g$ as above, we can draw conclusions about the geometry of $\calC$.  In this section, we let $\rho_m: X_m \longrightarrow E$, as defined in Section \ref{sect:Xmconst}, be the choice of cover for a fixed $m>1$.  Define $T_m$ as in \eqref{ToverWgen} and $\rho_m^r$ as the associated covering map.
\begin{equation} \label{TmmoverW}
\begin{tikzcd}
 T_m:= \{ (x_1, \ldots , x_r) \mid \rho_m (x_j) = \rho_m(x_1) + Q_j \} \dar{\rho_m^r} \\
 W:= \{ (P, P + Q_2, \ldots  P + Q_r) \mid P \in E \},
 \end{tikzcd}
\end{equation}
where $x_i \in X_m$ and $\rho_m^r((x_1,\ldots x_r))=(\rho_m(x_1),\ldots ,\rho_m(x_r))$.

The main result of this section is a formula for the genus of $T_m$ for all $m>1$.  To prove the result, we use the Riemann-Hurwitz Formula on the cover $\rho^r_m: T_m \longrightarrow W$.  The most important step is to show that $T_m$ is connected.

\subsection{Structure of the cover $T_m \longrightarrow W$}

Recall from the proof of Theorem \ref{generalconst} that $r=2(g+1-2\overline{g})$ and that $W$ is a smooth projective curve whose genus is $1$.
Also note, we can rewrite Lemma \ref{ghatmgenusandgal} so that for $X_m$ with genus $\overline{g}$, 
\begin{equation} m= \begin{cases} 
      (2/3)(\overline{g}+1) & \text{ if } \overline{g} \equiv 2 \bmod 3. \\
      (2\overline{g}+1)/3 & \text{ if } \overline{g} \equiv 1 \bmod 3.
   \end{cases}
 \label{mequals} \end{equation}

\begin{lem} \label{TWdeggal}
    The cover $\rho_m^r: T_m \longrightarrow W$ has degree $m^r$ and is Galois with $\mathrm{Gal}(T_m/W) \equiv (\Z/m\Z)^r$.
\end{lem}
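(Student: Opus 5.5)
The plan is to identify $T_m$ with a fiber product and read off the degree and the group action directly. Define the translation maps $t_j: W \cong E \to E$ by $P \mapsto P + Q_j$, with $t_1$ the identity since $Q_1 = \mathcal{O}$. Then, by the definition in \eqref{TmmoverW},
\[
T_m = \{(x_1,\ldots,x_r) \in X_m^r \mid \rho_m(x_j) = t_j(P) \text{ for some } P \in E\},
\]
so $T_m$ is the fiber product over $W$ of the $r$ pulled-back covers $t_j^*X_m \to W$. Each $t_j$ is an isomorphism, so each $t_j^*X_m \to W$ is isomorphic to $\rho_m$ composed with a translation. By Lemma \ref{ghatmgenusandgal}, each $t_j^*X_m \to W$ is therefore a Galois cover of degree $m$ with group $\Z/m\Z$.

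\emph{Degree.} Take $P \in W$ such that no $P+Q_j$ is a branch point of $\rho_m$. This excludes only finitely many $P$, since the branch locus $\{(0,\pm i),\mathcal{O}\}$ is finite. For such $P$, the fiber of $\rho_m^r$ over $P$ is $\prod_j \rho_m^{-1}(P+Q_j)$, which has exactly $m^r$ points. Hence $\deg \rho_m^r = m^r$.

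\emph{Group action.} Let $\sigma$ generate $\mathrm{Gal}(X_m/E)$; concretely $\sigma(x_2,y_2) = (\zeta_m x_2, y_2)$ with $\zeta_m$ a primitive $m$th root of unity. The group $(\Z/m\Z)^r$ acts on $X_m^r$ coordinatewise by $(a_1,\ldots,a_r)\cdot(x_1,\ldots,x_r) = (\sigma^{a_1}x_1,\ldots,\sigma^{a_r}x_r)$. Since $\rho_m \circ \sigma = \rho_m$, this action preserves $T_m$ and commutes with $\rho_m^r$. The action is faithful, and it is free on a generic fiber because $\mathrm{Gal}(X_m/E)$ acts simply transitively on the unramified fibers of $\rho_m$. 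So $(\Z/m\Z)^r$ acts simply transitively on the $m^r$ points of a generic fiber. This exhibits $(\Z/m\Z)^r$ as a group of deck transformations of order equal to the degree.

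\emph{Function fields.} To translate this into field theory as in Lemma \ref{ghatmgenusandgal}, note that the function field of $t_j^*X_m$ is $F(\sqrt[m]{t_j^*x_1})$, where $F = k(W)$ and $t_j^*x_1$ is the translate of the $x$-coordinate function. The function field of $T_m$ should then be the compositum $F\bigl(\sqrt[m]{t_1^*x_1},\ldots,\sqrt[m]{t_r^*x_1}\bigr)$. This is a Kummer extension, since $\mu_m \subset k$, and hence abelian Galois with group a subgroup of $(\Z/m\Z)^r$.

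\emph{Main obstacle: connectedness.} The fiber-product description always gives an étale-generic map of degree $m^r$ with that group acting, but $T_m$ might be disconnected, in which case the compositum is smaller. So the key step is to show that $T_m$ is irreducible, i.e.\ that the Kummer group generated by the functions $t_j^*x_1$ in $F^\times/F^{\times m}$ has order $m^r$. I would do this by divisor computations. The divisor of $x_1$ on $E$ is $(0,i)+(0,-i)-2\mathcal{O}$, so
\[
\operatorname{div}(t_j^*x_1) = (0,i)-Q_j + (0,-i)-Q_j - 2(-Q_j).
\]
For distinct $Q_j$, suitably generic so that the point sets $\{(0,\pm i)-Q_j, -Q_j\}$ are pairwise disjoint, any product $\prod_j (t_j^*x_1)^{a_j}$ with some $a_j \not\equiv 0 \bmod m$ has a zero or pole at $(0,i)-Q_j$ of order $a_j$, which is not divisible by $m$. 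Such a product therefore cannot be an $m$th power, so the Kummer group is all of $(\Z/m\Z)^r$. This yields connectedness, degree $m^r$, and $\mathrm{Gal}(T_m/W) \cong (\Z/m\Z)^r$. If the $Q_j$ are not generic, I would first try to argue that these branch points still give independent valuations for each coordinate. This is where I expect the real work to lie.
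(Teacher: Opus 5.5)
Your \emph{Degree} and \emph{Group action} steps are exactly the paper's proof. The paper counts the $m^r$ points over a point of $W$ none of whose entries is a branch point of $\rho_m$. It then lets $e_j$ multiply the $x_2$-coordinate of the $j$th entry by $\zeta_m$, an action that is transitive on fibers.

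You are right that calling this ``Galois'' in the function-field sense also requires $T_m$ to be irreducible. The paper's proof of this lemma assumes that silently. It only proves it later, in Proposition~\ref{Tcon}, under the extra hypothesis $Q_j-Q_l\neq(0,\pm i)$. Your Kummer step is therefore a genuinely different proof of that proposition, not part of this lemma. Your ``generic'' condition is exactly the paper's hypothesis: $\{(0,\pm i)-Q_j,\,-Q_j\}$ is the coset $-Q_j+H$ of the $3$-torsion subgroup $H=\{\mathcal{O},(0,i),(0,-i)\}$, and two cosets are either equal or disjoint.

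The paper argues by monodromy: a loop around $(0,i)-Q_j$ lifts to a path from the sheet $\vec{0}$ to the sheet $e_j$. Your valuation count is the algebraic counterpart of that local monodromy. It works directly over $k$ and avoids the paper's informal loop-shrinking. It also avoids the paper's base point $P=\mathcal{O}$, which for $m>2$ is itself in the branch locus of $\rho_m^r$. To close the argument, state explicitly that on each irreducible component of $T_m$ the $x_2$-coordinate of the $j$th entry is an $m$th root of $t_j^*x_1$. Then every component has degree at least $m^r$ over $W$, so there is only one component.

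Do not pursue your last sentence: without the genericity assumption, irreducibility is false. Take $Q_2=(0,i)$ and $Q_3=(0,-i)$, which is possible whenever $r\geq 3$. For $P=(x,y)$ the addition law gives $x(P+(0,\pm i))=-2(1\pm iy)/x^2$, so
\[
x(P)\,x\bigl(P+(0,i)\bigr)\,x\bigl(P+(0,-i)\bigr)=\frac{4(1+y^2)}{x^3}=4.
\]
Consequences:
\begin{itemize}
\item $t_1^*x_1\,t_2^*x_1\,t_3^*x_1\in F^{\times m}$, so your compositum has degree at most $m^{r-1}$ over $F$.
\item The product $h$ of the $x_2$-coordinates of the first three entries satisfies $h^m=4$ on $T_m$, so $h$ is constant on each component.
\item Since $e_1$ sends $h$ to $\zeta_m h$, all $m$ values occur and $T_m$ has at least $m$ components.
\end{itemize}
So the unconditional content of the lemma is exactly what your first two steps already prove: a degree-$m^r$ map with $(\Z/m\Z)^r$ acting simply transitively on generic fibers. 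Irreducibility genuinely needs the hypothesis the paper introduces afterwards; no independence-of-valuations argument can remove it.
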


\begin{proof}

Recall $\rho_m^r ((u_1,v_2),\ldots ,(u_r,v_r)) = ((u_1^m,v_2),\ldots ,(u_r^m,v_r))$.  A pre-image is
 $$(\rho_m^r)^{-1}((u_1,v_2),\ldots ,(u_r,v_r))= \{ ((\zeta_m^{l_0} \sqrt[m]{u_1},v_1),\ldots ,(\zeta_m^{l_r} \sqrt[m]{u_r},v_r)) \mid 0 < l_j \leq m  \}.$$  A non-branch point has $m^r$ pre-images.  Note that by equation \eqref{mequals} $m^r$ depends on $\overline{g}$.

Let $e_j$ be the $j$th standard basis element of $(\Z/m\Z)^r$.  The element $e_j$ acts on the fiber by multiplying the $j$th coordinate by $\zeta_m$, taking $(\zeta_m^{l_j}\sqrt[m]{u_j},v_j)$ to $(\zeta_m^{l_j+1}\sqrt[m]{u_j},v_j)$.  So, $\mathrm{Gal}(T_m/W) \cong (\Z/m\Z)^r$.  This action is transitive on the fiber, so the cover is Galois.
    
\end{proof}

Now that we've established basic facts about the structure of the cover, we will examine the ramification.

\subsection{Ramification of the cover $T_m \longrightarrow W$}

To simplify the ramification of $\rho_m^r: T_m \longrightarrow W$, we impose a simple condition on $Q_1,\ldots ,Q_r$.  The condition is harmless to the construction because each $Q_j$ is fixed.

\begin{lem} \label{TWbranch}
    Suppose $Q_j - Q_l \not = (0,\pm i)$ for all $1 \leq j,l \leq r$. Let $B$ be the set of branch points of $\rho^r_m: T_m \longrightarrow W$. A point $\omega \in W$ is in $B$ if and only if exactly one entry of $\omega$ is a branch point of $\rho_m: X_m \longrightarrow E$.
\end{lem}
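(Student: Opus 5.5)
The plan is to compute the fibers of $\rho_m^r$ directly, using the fact that $T_m$ is built coordinatewise from the single cover $\rho_m$ and translations on $E$. Identify $W$ with $E$ via $\omega=(P,P+Q_2,\ldots,P+Q_r)\mapsto P$. For each $j$ let $t_j:E\to E$ be the translation $P\mapsto P+Q_j$. Then $T_m$ is the fiber product over $W\cong E$ of the $r$ covers $t_j^*X_m\to E$, each of which is a translate of $\rho_m$. Consequently the fiber of $\rho_m^r$ over $\omega$ is the product $\prod_j \rho_m^{-1}(P+Q_j)$.

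By Lemma \ref{TWdeggal}, $\rho_m^r$ is Galois of degree $m^r$. So $\omega\in B$ if and only if this fiber has fewer than $m^r$ points. Since $\rho_m$ has degree $m$, that happens if and only if some factor $\rho_m^{-1}(P+Q_j)$ has fewer than $m$ points, that is, if and only if at least one entry $P+Q_j$ of $\omega$ is a branch point of $\rho_m$. Lemma \ref{ramindexXm} identifies these branch points.

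It then remains to show that, under the hypothesis on the $Q_j$, no $\omega$ can have two entries that are branch points; then ``at least one'' becomes ``exactly one''. By Lemma \ref{ramindexXm} and its proof, the branch points of $\rho_m$ lie in $\{(0,i),(0,-i),\mathcal{O}\}$. The key observation is that these three points form the $3$-torsion subgroup generated by $(0,i)$ on $y_1^2=x_1^3-1$.

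To prove this, I will check that the horizontal tangent line $y_1=i$ meets $E$ only at $(0,i)$, with multiplicity $3$, because $x_1^3=0$ there. Hence $(0,i)$ is a flex, so $3(0,i)=\mathcal{O}$, and $(0,-i)=-(0,i)=2(0,i)$. It follows that the difference of any two distinct points of $\{\mathcal{O},(0,i),(0,-i)\}$ is $(0,i)$ or $(0,-i)$.

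Now suppose two entries $P+Q_j$ and $P+Q_l$ with $j\neq l$ were both branch points. They are distinct because the $Q_j$ are distinct, so their difference $Q_j-Q_l$ would lie in $\{(0,\pm i)\}$. This contradicts the hypothesis of the lemma.

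The step I expect to need the most care is the claim that ramification of $\rho_m^r$ over $\omega$ is detected exactly by fiber cardinality. That requires the fiber product description to give the actual (smooth) curve $T_m$, with no normalization hiding extra points. I would handle it as follows. At a point over $\omega$, at most one coordinate cover is ramified, by the argument above. The remaining factors are étale there, so locally $T_m$ is an étale cover of a single translate of the smooth curve $X_m$. Hence $T_m$ is smooth at these points and the point count is accurate. The ramification index there equals that of $\rho_m$ at the unique branch entry, which will also be useful for the later Riemann--Hurwitz computation.
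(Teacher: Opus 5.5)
Your proposal is correct and follows essentially the same route as the paper: a branch entry of $\omega$ forces $\omega\in B$, and since $\{\mathcal{O},(0,i),(0,-i)\}$ is the order-$3$ subgroup generated by $(0,i)$, two distinct branch entries $P+Q_j\neq P+Q_l$ would give $Q_j-Q_l=(0,\pm i)$, contradicting the hypothesis. Your version is somewhat more complete than the paper's: it proves the converse direction via fiber counts, checks that $T_m$ is smooth over $B$ so no normalization changes those counts, and treats $\mathcal{O}$ uniformly instead of splitting by the parity of $m$ (the paper's odd-$m$ case omits $\mathcal{O}$, which is in fact branched there).
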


\begin{proof}
    Any point in $W$ with at least one entry that is a branch point of $\rho_m$ is a branch point of $\rho_m^r$.
    
    We claim that by the condition $Q_j - Q_l \not = (0,\pm i)$ for all $1 \leq j,l \leq r$, at most one entry of $\omega$ is a branch point of $\rho_m$.  Recall the branch points of $\rho_m$ are $(0,i)$, $(0,-i)$, and possibly the point at infinity.  Note that using the addition law on $E$, we get $2(0,-i) = (0,i)$, $2(0,i) = (0,-i)$, and $-(0,i)=(0,-i)$.  In other words, $(0,\pm i)$ are $3$-torsion points of $E$.  Using these facts, we can observe a contradiction in each case where a branch point occurs in more that one entry of $\omega$.  
    
    Case 1. The degree $m$ is odd, so the branch points of $\rho_m$ are $(0,i)$ and $(0,-i)$. Suppose $P+Q_l$ is a branch point.  If $P+Q_l=P+Q_j$, then we have a contradiction because $Q_1,\ldots ,Q_r$ are distinct.  If $P+Q_l=-(P+Q_j)$, then $Q_l-Q_j = P+Q_l - (P+Q_j)= 2(P+Q_l)= 2(0,\pm i) = (0, \pm i)$ which contradicts the hypothesis.

    Case 2. The degree $m$ is even, so the branch points of $\rho_m$ are $(0,i)$, $(0,-i)$, and the point at infinity $\mathcal{O}$.  Suppose $P+Q_l$ is a branch point.  The cases where $P+Q_l$ and $P+Q_j$ are both $(0,\pm i)$ proceed as above, so let $P+Q_l = \mathcal{O}$.
    
    If $P + Q_j = (0, \pm i)$ then $P+Q_j - (P + Q_l) = (0, \pm i)$. This is a contradiction with the hypothesis.  If $P+Q_j = \mathcal{O}$, then we have $Q_j = Q_l$ which is also a contradiction.
    
    So, at every branch point of $\rho^r_m$, exactly one of the coordinates is a branch point of $\rho_m$.

\end{proof}

Recall, the value of $r$ depends on $\overline{g}$ and $g$.

\begin{lem} \label{numbranchpts}
    Suppose $Q_j - Q_l \not = (0,\pm i)$ for all $1 \leq j,l \leq r$. Let B be the set of branch points of the cover $\rho_m^r: T_m \longrightarrow W$.  Then for all $g > 2\overline{g}-1$,
    $$ \#B =  \begin{cases}
2r=4(g-3), & \text{ if } \overline{g} = 2. \\ 
3r=6(g+1-2\overline{g}), & \text{ if } \overline{g} > 2 \text{ and } \overline{g} \not \equiv 0 \bmod 3.\\ 
\end{cases}$$
\end{lem}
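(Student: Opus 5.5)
The plan is to count pairs consisting of a coordinate index and a branch point of $\rho_m$, and to use Lemma \ref{TWbranch} to show that these pairs correspond bijectively to the points of $B$. Let $B_m \subset E$ be the set of branch points of $\rho_m: X_m \longrightarrow E$. A point $\omega = (P, P+Q_2, \ldots, P+Q_r) \in W$ is determined by $P$. Its $j$th entry is $P+Q_j$, which equals a given $L \in B_m$ exactly when $P = L - Q_j$. So for each pair $(j, L) \in \{1,\ldots,r\} \times B_m$ there is exactly one $\omega_{j,L} \in W$ whose $j$th entry is $L$.

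Next I will show that $(j,L) \mapsto \omega_{j,L}$ is a bijection onto $B$.
\begin{itemize}
\item \emph{Image lies in $B$.} Each $\omega_{j,L}$ has an entry that is a branch point of $\rho_m$. By the first observation in the proof of Lemma \ref{TWbranch}, it is therefore a branch point of $\rho_m^r$.
\item \emph{Surjectivity.} By Lemma \ref{TWbranch}, every $\omega \in B$ has exactly one entry, say the $j$th, equal to some $L \in B_m$. Hence $\omega = \omega_{j,L}$.
\item \emph{Injectivity.} Suppose $\omega_{j,L} = \omega_{j',L'}$. That point has $L$ in position $j$ and $L'$ in position $j'$. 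By the "exactly one entry" part of Lemma \ref{TWbranch}, this forces $j = j'$, and then $L = L'$.
\end{itemize}
Consequently $\#B = r \cdot \#B_m$.

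The remaining work, and the only real subtlety, is computing $\#B_m$ from Lemma \ref{ramindexXm}. The points $(0,\pm i)$ are always branch points, since there $e = m > 1$. The question is whether $\mathcal{O}$ is a branch point.
\begin{itemize}
\item If $\overline{g} = 2$, then $m = 2$ by \eqref{mequals}. The curve $X_2$ has two points over $\mathcal{O}$, each with ramification index $m/2 = 1$, so $\mathcal{O}$ is not branched. Hence $\#B_m = 2$.
\item If $\overline{g} > 2$ and $\overline{g} \equiv 1 \bmod 3$, then $m$ is odd with $m \geq 3$. Lemma \ref{ramindexXm} gives $e = m > 1$ at $\mathcal{O}$.
\item If $\overline{g} > 2$ and $\overline{g} \equiv 2 \bmod 3$, then $m$ is even with $m \geq 4$. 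Lemma \ref{ramindexXm} gives $e = m/2 \geq 2$ at $\mathcal{O}$.
\end{itemize}
In both cases with $\overline{g} > 2$ we get $\#B_m = 3$. A degree check confirms this: for odd $m$, the polynomial $x_2^{3m} - 1$ has odd degree, so there is a single point at infinity, which is totally ramified.

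I expect the main obstacle to be consistency with Case 1 of the proof of Lemma \ref{TWbranch}. That case lists only $(0,\pm i)$ as branch points when $m$ is odd, whereas Lemma \ref{ramindexXm} also includes $\mathcal{O}$. To stay safe, I will check that the "at most one entry" argument still holds when $\mathcal{O}$ is a branch point and $m$ is odd. Case 2 of that proof, which treats $\mathcal{O}$ alongside $(0,\pm i)$, uses only the hypothesis on $Q_j - Q_l$ and never the parity of $m$. So it applies verbatim.

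Finally, substitute $r = 2(g+1-2\overline{g})$:
\begin{itemize}
\item for $\overline{g} = 2$, $\#B = 2r = 4(g-3)$;
\item for $\overline{g} > 2$ with $\overline{g} \not\equiv 0 \bmod 3$, $\#B = 3r = 6(g+1-2\overline{g})$.
\end{itemize}
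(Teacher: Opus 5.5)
Your proposal is correct and follows the paper's argument. Both use Lemma \ref{TWbranch} to identify $B$ with the set of $\omega$ having exactly one entry equal to a branch point of $\rho_m$, which gives $\#B = r$ times the number of branch points of $\rho_m$ (two when $m=2$, three when $m>2$). You go beyond the paper in two places: you make the injectivity of $(j,L)\mapsto\omega_{j,L}$ explicit, and you notice that Case 1 of the proof of Lemma \ref{TWbranch} wrongly omits $\mathcal{O}$ for odd $m$, which is harmless because the Case 2 argument never uses the parity of $m$.
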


\begin{proof}
    
    In the case $\overline{g}=2$, the branch points of $\rho_2$ are $\{ (0,\pm i)\}$. Define $$B_2 := \{ \omega \in W \mid P+Q_j = (0, \pm i), \text{ for one index } j \}.$$  By Lemma \ref{TWbranch}, $B_2$ is the set of branch points of $\rho_2^r: T_2 \longrightarrow W$.  Notice there are $r$ choices for $j$, so $\mid B_2 \mid = 2r$.

    If $\overline{g}>2$, the branch points of $\rho_m$ (for $m>2$) are $\{ (0,\pm i), \mathcal{O}\}$. Let $$B_{\overline{g}>2} := \{ \omega \in W \mid P+Q_j = (0, \pm i) \text{ or } \mathcal{O}, \text{ for one index } j \}.$$  

    Similarly, by Lemma \ref{TWbranch} $B_{\overline{g}>2}$ is the set of branch points of $\rho_m^r: T_m \longrightarrow W$ for $m > 2$.  There are $r$ choices for $j$, so there are $3r$ branch points.  Since $r= 2(g+1-2\overline{g})$ we obtain the result.
\end{proof}

We again refer to the ramification index of a branch point of $\rho_m^r: T_m \longrightarrow W$ since this cover is also abelian by Lemma \ref{TWdeggal}.

\begin{lem} \label{TWDiff}
    Let $R$ be the set of ramification points in $T_m$ under the map $\rho_m^r: T_m \longrightarrow W$ and let $e_t$ be the ramification index for $t \in R$ .  Then, 
    $$D := \sum_{t \in R}(e_t -1) = \begin{cases}
    r2^{r}, & \text{ if } \overline{g} = 2. \\ 
    r(2\overline{g}+1)/(3)^{r-1}(2\overline{g}-2), & \text{ if } \overline{g} >2  \text{ and } \overline{g} \equiv 1 \bmod 3. \\ 
    r((2/3)(\overline{g}+1))^{r-1}(2\overline{g}-2), & \text{ if } \overline{g} > 2  \text{ and } \overline{g} \equiv 2 \bmod 3. \\
    \end{cases}$$
\end{lem}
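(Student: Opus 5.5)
The plan is to compute $D$ fiber by fiber over the branch locus $B$ of $\rho_m^r\colon T_m \longrightarrow W$, using the structure from Lemmas \ref{TWdeggal}, \ref{TWbranch}, \ref{numbranchpts} and \ref{ramindexXm}. Since the cover is Galois of degree $m^r$ by Lemma \ref{TWdeggal}, all points over a given $\omega \in B$ share one ramification index $e_\omega$. The fiber then has $m^r/e_\omega$ points, so
$$\sum_{t \mapsto \omega}(e_t-1) = \frac{m^r}{e_\omega}(e_\omega - 1).$$

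\textbf{Step 1: identify $e_\omega$.} By Lemma \ref{TWbranch}, exactly one coordinate $P+Q_j$ of $\omega$ is a branch point $L$ of $\rho_m$, and every other coordinate is unramified. Because $T_m$ sits inside the product of the fibers coordinatewise, the local ramification over $\omega$ comes only from the $j$th factor. Hence $e_\omega$ equals the index $e$ of $L$ given in Lemma \ref{ramindexXm}. Concretely, the $j$th coordinate has $m/e$ preimages each of index $e$, and each other coordinate has $m$ unramified preimages. This is the step needing the most care: I would argue it via the local description $u_j \mapsto u_j^m$ from the proof of Lemma \ref{TWdeggal}, or equivalently by noting that the inertia group at a point over $\omega$ is the inertia subgroup of the $j$th factor $\Z/m\Z$. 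The contribution of $\omega$ is therefore $m^{r-1}\cdot (m/e)(e-1)$.

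\textbf{Step 2: sum over $B$ using Lemma \ref{numbranchpts}.}

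When $\overline{g}=2$, we have $m=2$, and there are $2r$ branch points, all with $e=2$. Each contributes $2^{r-1}$, giving $D = r2^r$.

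When $\overline{g}\equiv 1 \bmod 3$, $m$ is odd, and all $3r$ branch points have $e=m$. This gives $D=3r\,m^{r-1}(m-1)$. Substituting $m=(2\overline{g}+1)/3$ from \eqref{mequals} gives $3(m-1)=2\overline{g}-2$, so $D = r\,m^{r-1}(2\overline{g}-2)$.

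When $\overline{g}\equiv 2 \bmod 3$, $m$ is even with $m>2$. The $2r$ branch points lying over $(0,\pm i)$ have $e=m$ and contribute $m^{r-1}(m-1)$ each. The $r$ branch points lying over $\mathcal{O}$ have $e=m/2$ and contribute $m^{r-1}\cdot 2(m/2-1)=m^{r-1}(m-2)$ each. The total is $r\,m^{r-1}(3m-4)$. With $m=\tfrac{2}{3}(\overline{g}+1)$ we get $3m-4=2\overline{g}-2$, which yields the stated formula.

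The only genuine obstacle is the justification in Step 1 that the ramification index over $\omega$ is exactly the index of the single branched coordinate. Everything after that is bookkeeping with \eqref{mequals}.
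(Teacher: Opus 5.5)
Your proposal is correct and follows the paper's proof essentially step for step. You use Lemma \ref{TWbranch} to isolate the unique branched coordinate, Lemma \ref{ramindexXm} to get its index $e$, count $m^{r-1}(m/e)$ points per fiber, and sum over the branch points from Lemma \ref{numbranchpts}, splitting the $\overline{g}\equiv 2 \bmod 3$ case into the $2r$ points over $(0,\pm i)$ and the $r$ points over $\mathcal{O}$. Your substitutions $3(m-1)=2\overline{g}-2$ and $3m-4=2\overline{g}-2$ via \eqref{mequals} supply the final arithmetic the paper leaves implicit, and your inertia-group justification in Step 1 is more careful than the paper's bare assertion that the two ramification indices agree.
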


\begin{proof} 
Let $B$ be the set of branch points of $\rho^r_m$ and $\omega \in B$.  By Lemma \ref{TWdeggal}, $(\rho^r_m)^{-1}(B) = R$.

Using Lemma \ref{TWbranch}, if $\omega$ is a branch point of $\rho_m^r$ then there is an index $0 \leq l \leq r$ so that $P+Q_l$ is a branch point of $\rho_m$ and no other entry of $\omega$ is a branch point of $\rho_m$.  Then the ramification indices of $\rho_m^r$ above $\omega$ and $\rho_m$ above $P+Q_l$ are equal, so we use Lemma \ref{ramindexXm} to find the ramification index above $\omega$.  Then there are $m^{r-1}(m/e)$ points in the fiber of $\rho_m^r$ above $\omega$ with the value of $e$ in Lemma \ref{ramindexXm}.

Case 1. If $\overline{g}=2$, then $m=2$ and there are $m^{r-1}$ points in $(\rho^r_m)^{-1}(\omega)$ for each choice of $\omega$ and the ramification indices of those points are all equal to $2$ by the fact above and Lemma \ref{ramindexXm}, so $\sum_{t \in (\rho^r_2)^{-1}(\omega)} (e_t -1) = 2^{r-1}$.  Then by Lemma \ref{numbranchpts}, there are $2r$ choices of $\omega$.

Case 2. If $\overline{g}>2$ and $\overline{g} \equiv 1 \bmod 3$, there are $m^{r-1}$ points in $(\rho^r_m)^{-1}(\omega)$ for each choice of $\omega$.  The ramification index of each point in $(\rho^r_m)^{-1}(\omega)$ is $m=(2\overline{g}+1)/3$. So, for all $\omega \in B$, $\sum_{t \in (\rho^r_m)^{-1}(\omega)}(e_t -1) =m^{r-1}(m-1)$.  By Lemma \ref{numbranchpts}, there are $3r$ choices of $\omega$.
  
Case 3.  If $\overline{g}>2$ and $\overline{g} \equiv 2 \bmod 3$, we have two cases.  Again let $\omega=(P,P+Q_2,\ldots ,P+Q_r) \in B$.

Case 3a. If $P+Q_l = (0, \pm i)$, as in Case 2, there are $m^{r-1}$ points in $(\rho^r_m)^{-1}(\omega)$ and each ramification index is $m=(2/3)(\overline{g}+1)$.  Here there are $2r$ choices for $\omega$.

Case 3b. If $P+Q_l = \mathcal{O}$, there are $m^r/(m/2)= 2m^{r-1}$ points in $(\rho^r_m)^{-1}(\omega)$.  So, each ramification point in the pre-image has a ramification index of $m/2$. So we obtain $\sum_{t \in (\rho^r_m)^{-1}(\omega)}(e_t -1) = 2m^{r-1}(m/2 -1)= m^{r-1}(m-2)$.
Then there are $r$ choices of $\omega$ in this case.
\end{proof}

One can rewrite Lemma \ref{TWDiff} in terms of $g$ and $\overline{g}$ instead of $r$ and $\overline{g}$ if desired.

\subsection{Genus of $T_m$}

By Lemma \ref{TWdeggal}, we index the branches of the cover $T_m \longrightarrow W$ over a chosen base point by elements of $(\Z/m\Z)^r$.

\begin{prop} \label{Tcon}
        Suppose $Q_j - Q_l \not = (0,\pm i)$ for all $ 1 \leq j,l \leq r$.  Then $T_m$ is connected.
\end{prop}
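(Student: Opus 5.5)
We show that the monodromy group of the unramified part of $\rho_m^r: T_m \longrightarrow W$ acts transitively on a fiber. By Lemma \ref{TWdeggal} the cover is Galois with group $(\Z/m\Z)^r$, and the sheets over a base point $\omega_0 \in W - B$ are indexed by $(\Z/m\Z)^r$. Hence $T_m$ is connected exactly when the monodromy representation $\pi_1(W - B,\omega_0) \longrightarrow (\Z/m\Z)^r$ is surjective. Equivalently, the fiber product $T_m = X_m \times_E \cdots \times_E X_m$, pulled back along the translation maps $W \cong E \to E$, $P \mapsto P+Q_j$, is irreducible. It therefore suffices to show that the images of small loops around the branch points of $\rho_m^r$ generate $(\Z/m\Z)^r$.

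\textbf{Step 1: local monodromy near a branch point.} Let $\omega \in B$. By Lemma \ref{TWbranch}, exactly one coordinate $P+Q_l$ of $\omega$ is a branch point $L$ of $\rho_m$. Near $\omega$, the other coordinates lie in open sets over which $\rho_m$ is unramified, so those factors of the cover are trivial locally. Consequently a small loop $\gamma_\omega$ around $\omega$ in $W$ has monodromy $\bigl(0,\ldots,0,\sigma_L,0,\ldots,0\bigr)$, with the entry in the $l$-th slot. Here $\sigma_L \in \Z/m\Z$ is the local monodromy of $\rho_m$ around $L$, transported along $P \mapsto P+Q_l$. This transport is an isomorphism $W \cong E$, so it preserves the local structure. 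By Lemma \ref{ramindexXm}, the cover $\rho_m$ is totally ramified at $(0,i)$ (index $m$). Therefore $\sigma_{(0,i)}$ is a generator of $\Z/m\Z$; explicitly, $x_1$ vanishes to order $1$ there, so $\sqrt[m]{x_1}$ picks up a primitive $m$-th root of unity.

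\textbf{Step 2: generation.} For each $l$, the point $\omega_l$ with $P = (0,i) - Q_l$ lies in $B$. The loop $\gamma_{\omega_l}$ maps to a generator of the $l$-th factor $\Z/m\Z$. Letting $l$ range over $1,\ldots,r$, these images generate $(\Z/m\Z)^r$. The monodromy is then surjective and transitive on the fiber, and $T_m$ is connected. The hypothesis $Q_j - Q_l \neq (0,\pm i)$ enters exactly here: through Lemma \ref{TWbranch} it guarantees that over $\omega_l$ no other coordinate is branched, so the local monodromy is concentrated in a single coordinate.

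\textbf{Main obstacle.} The delicate point is Step 1: we must justify that the local monodromy of the fiber product at $\omega_l$ is exactly $\sigma_L$ in slot $l$, and that the indexing of sheets by $(\Z/m\Z)^r$ is compatible with conjugating loops back to the base point. I would handle this algebraically instead. The function field of each component of $T_m$ is obtained from $k(W)$ by adjoining $\sqrt[m]{f_j}$, where $f_j = x_1\circ(P \mapsto P+Q_j)$. By Kummer theory, $T_m$ is irreducible iff the classes of $f_1,\ldots,f_r$ are independent in $k(W)^\times/(k(W)^\times)^m$. In that group a relation $\prod f_j^{a_j} = h^m$ forces every valuation of the left side to be divisible by $m$. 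But $f_l$ has a simple zero at $(0,i) - Q_l$, and by the hypothesis no other $f_j$ has a zero or pole there. Hence $m \mid a_l$ for every $l$, which proves independence. This valuation argument is essentially the monodromy argument, and it avoids the basepoint bookkeeping.
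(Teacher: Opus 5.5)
Your proof is correct and is essentially the paper's argument: loops around $(0,i)-Q_l$ map to generators of the $l$th factor of $(\Z/m\Z)^r$, with Lemma \ref{TWbranch} guaranteeing that only the $l$th coordinate meets a branch point of $\rho_m$. You also make explicit two things the paper leaves implicit: total ramification at $(0,i)$ is what makes that image a generator, and the base point must avoid $B$ (the paper's choice $P=\mathcal{O}$ lies over a branch point once $m>2$).

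Your Kummer-theoretic version is the algebraic form of the same idea: $f_l$ has a simple zero at $(0,i)-Q_l$, while the hypothesis, together with distinctness of the $Q_j$, makes every other $f_j$ a unit there. It has the small advantage of working directly over $k$ without passing to $\C$.
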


    \begin{proof}
    Note, it suffices to prove $T_m$ is connected over $\C$.
    
    Consider the monodromy of the cover $\rho_m^r: T_m \longrightarrow W$.  To show $T_m$ is connected, we need to show that the action of lifting loops in $W$ to paths in $T_m$ induces a transitive action on the branches of $T_m$.  First, note that $W \cong E$, since every point in $W$ is determined by one point, $P$ in $E$.  From Lemma \ref{TWdeggal}, each branch of $T$ is indexed by an element of $(\Z/m\Z)^r$.  
    
    Consider a loop $L$ in $E$ with base point $\mathcal{O}$.  The loop $L$ in $E$ determines a loop $\Tilde{L}$, in $T_m$ by letting $P$ travel around $L$ and defining points $(P, P+Q_2,...,P+Q_r)$.  
    
    Fix $j$.  From $\mathcal{O}$ let $L$ circle counterclockwise around the point $(0,i)-Q_j$ (we use $(0,i)$ without loss of generality, the argument goes through for $(0,-i)$ as well) and return to $\mathcal{O}$. Notice, if $L$ travels around $(0,i)-Q_j$ then in $\Tilde{L}$ the $j$th entry travels around $(0,i)$.  We can understand $\Tilde{L}$ as a loop where the $j$th entry circles a point $\omega \in B$ as defined above where the $jth$ entry of $\omega$ is a branch point of $\rho_m$.

    By Lemma \ref{TWbranch}, for indices $l \not = j$, the $l$th component of $\omega$ is not a branch point of $\rho_m$.  This means that we can shrink $\Tilde{L}$ within its homotopy class so that only one entry travels around $(0, i)$.

\begin{figure}[h]
\begin{tikzpicture}[xslant=0,xscale=1,yscale=1, every node/.append style={inner xsep=0pt}]

\begin{scope}[xslant=0.5,xscale=6,yscale=5]
    \draw (0,0) rectangle (1,1);
    \draw(.5,0) node {$||$};
    \draw(.5,1) node {$||$};
    \draw(0,.5) node {$|$};
    \draw(1,.5) node {$|$};
    \draw(.3,.3) node {\color{blue}{$\times$}};
    \draw(.6,.6) node {\color{blue}{$\times$}};
    \draw (0,1) node[above] {$E$};
    \draw[violet,thick] (0,0) node {$\bullet$} to (.25,.25);
    \draw (0,0) node[below] {$\mathcal{O}$};
    \draw[violet,thick] (.5,.2) node {$\bullet$} to (.75, .45);
    \draw (.5,.2) node[below] {$Q_2$};
    \draw(.35,.35) node {\color{red}{$\bullet$}};
    \draw(.35,.35) node [above]{\hspace{5mm} $P$};
    \draw(.85,.55) node {\color{red}{$\bullet$}};
    \draw(.85,.55) node [above]{\hspace{7mm} $P+Q_2$};
    \draw (1.1,.15) node[right] {};
    \draw (1.1,.05) node[right] {\color{blue}{$(0, \pm i)$}};
\end{scope}
    \draw[violet,thick] (2.55,1.5) circle [radius=0.5];
    \draw[violet,thick] (6.05, 2.5) circle [radius=0.5];
\end{tikzpicture}
\caption[Loops on curve $E$ as $P$ varies.]{A diagram of loops on $E$ for $r=2$ as $P$ varies, avoiding more than one branch point of $\rho_2$.}
    \label{fig:loopsonE}
\end{figure}  

When $\Tilde{L}$ is lifted up to $T_m$ it creates a path starting at a point on the branch labeled by $\vec{0}$ and ending at a point on the branch labeled by $e_j$, the $j$th generator of $(\Z/m\Z)^r$.  This is because in $\Tilde{L}$, the $j$th entry traveled around a branch point of $\rho_m$ and no other entries traveled around a branch point.  Since $j$ is an arbitrary index, the image of loops generate the Galois group $(\Z/m\Z)^r$.  Then $T_m$ is connected.
\end{proof}

Now that we've shown $T_m$ is connected we can use the Riemann-Hurwitz formula to find the genus of $T_m$.  

\begin{thm} \label{main}
Suppose $Q_j-Q_l \not = (0,\pm i)$ for $1 \leq j, l \leq r$.  Let $\overline{g}$ the genus of $X_m$ and $r=2(g+1-2\overline{g})$. Then for all $g>2\overline{g}-1$ there exists a complete curve as the image of $T_m \longrightarrow \calM_{g,\overline{g}}$ where $T_m$ has genus

$$G_T =  \begin{cases} 
r2^{r-1}+1, & \text{ if } \overline{g} = 2. \\ 
r(\frac{2\overline{g}+1}{3})^{r-1}(\overline{g}-1)+1, & \text{ if } \overline{g}>2 \text{ and } \overline{g} \equiv 1 \bmod 3. \\ 
r(\frac{2}{3}(\overline{g}+1))^{r-1}(\overline{g}-1)+1, & \text{ if } \overline{g} > 2 \text{ and } \overline{g} \equiv 2 \bmod 3. \\
    \end{cases}$$
\end{thm}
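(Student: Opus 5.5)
The plan is to apply the Riemann--Hurwitz formula to the cover $\rho_m^r: T_m \longrightarrow W$, taking almost every ingredient from the preceding lemmas. The existence of the complete curve is already given by Corollary \ref{rho_mCC}, which itself rests on Theorem \ref{generalconst}, so only the genus formula remains. Riemann--Hurwitz requires $T_m$ to be a connected curve, and under the hypothesis $Q_j - Q_l \neq (0,\pm i)$ this is exactly Proposition \ref{Tcon}. The same hypothesis is what lets Lemmas \ref{TWbranch}, \ref{numbranchpts} and \ref{TWDiff} describe the branch locus and the ramification. The remaining inputs are that $W \cong E$ has genus $1$ and that, by Lemma \ref{TWdeggal}, the cover has degree $m^r$. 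Since the characteristic is $0$, all ramification is tame, and the formula reads
\[
2G_T - 2 = m^r(2\cdot 1 - 2) + D = D,
\]
where $D$ is the ramification total computed in Lemma \ref{TWDiff}. So $G_T = D/2 + 1$, and what is left is to evaluate $D$ in each case.

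I would go through the three cases of Lemma \ref{TWDiff} and recompute $D$ directly from its proof rather than relying on the displayed closed forms.

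\emph{Case $\overline{g}=2$.} Here $m=2$, and there are $2r$ branch points. Each contributes $2^{r-1}(2-1)$ to $D$, so $D = r2^r$ and $G_T = r2^{r-1}+1$.

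\emph{Case $\overline{g}\equiv 1 \bmod 3$.} There are $3r$ branch points, each fully ramified with index $m$. This gives $D = 3r\,m^{r-1}(m-1)$. Since $m=(2\overline{g}+1)/3$ by \eqref{mequals}, we have $3(m-1) = 2\overline{g}-2$, hence $D = r\,m^{r-1}(2\overline{g}-2)$ and $G_T = r\,m^{r-1}(\overline{g}-1)+1$.

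\emph{Case $\overline{g}\equiv 2 \bmod 3$.} The $2r$ branch points over $(0,\pm i)$ each contribute $m^{r-1}(m-1)$. The $r$ branch points over $\mathcal O$ each contribute $m^{r-1}(m-2)$. The total is $D = r\,m^{r-1}(3m-4)$. Since $m = \tfrac23(\overline{g}+1)$, we get $3m-4 = 2\overline{g}-2$, which gives the stated formula.

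I expect the main obstacle to be bookkeeping rather than ideas, namely keeping the ramification data consistent across cases. Lemma \ref{ramindexXm} gives $e=m$ at $(0,\pm i)$, while the proof of Corollary \ref{rho_mCC} mentions index $m-1$; I would use $e=m$, justified by total ramification in a cyclic degree-$m$ cover. A second issue is that for odd $m$ the point $\mathcal O$ is a branch point of $\rho_m$. This would break Case 1 of the proof of Lemma \ref{TWbranch}, so I would check that the argument there still goes through when $\mathcal O$ is included, which follows just as in its Case 2. As a final sanity check, the result should be a nonnegative integer. In the odd case this holds because $m^{r-1}$ is an integer and $r$ is even, so $D$ is even.
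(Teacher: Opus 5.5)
Your proposal is correct and follows the paper's proof: Riemann--Hurwitz for $\rho_m^r: T_m \longrightarrow W$ with $G_W=1$ gives $G_T = D/2+1$, with connectedness from Proposition~\ref{Tcon} and $D$ from Lemma~\ref{TWDiff}, and your identities $3(m-1)=2\overline{g}-2$ and $3m-4=2\overline{g}-2$ are exactly the simplification the paper leaves implicit. Your two corrections to the supporting lemmas are also right: the index at $(0,\pm i)$ is $m$ (the $m-1$ in the proof of Corollary~\ref{rho_mCC} is really $e-1$), and for odd $m$ the point $\mathcal{O}$ is a branch point, which the hypothesis still handles because $\{\mathcal{O},(0,i),(0,-i)\}$ is a subgroup of order $3$ of $E$ containing every branch point of $\rho_m$.
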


    \begin{proof}
        By Proposition \ref{Tcon}, $T_m$ is connected.  By Lemma \ref{TWDiff} we know the degree $D$ of the ramification divisor.  In all cases, 
        $$2G_T-2 = m^r(2G_E-2)+D$$
        $$G_T= 1+ \frac{1}{2}D$$
Then we can simplify to get the result.
    \end{proof}

\section{An Alternate Choice for Covers $X_m$ over $E$} \label{sect:altconst}

Instead of the cover in Sections \ref{sect:Xmconst} and \ref{sect:genusT}, let $X_m$ be the smooth projective curve with affine equation $ y_2^{2m}-(x_2^3 - 1)=0$.  Let $E$ be defined as above. Then we define a new cover for $m > 1$,
\begin{equation} \label{YmoverE}
\begin{tikzcd}
 X_m: y_2^{2m}-(x_2^3-1)= 0 \dar{\rho_m} \\
 E: y_1^2-(x_1^3-1) =0
 \end{tikzcd}
\end{equation}

 such that $\rho_m(x_2,y_2) = (x_2,y_2^m)$.  The cover is Galois with Galois group $\Z/ m \Z$.  We proceed similarly to Sections \ref{sect:Xmconst} and \ref{sect:genusT}.

\begin{lem} \label{altXmgenus}
    The curve $X_m$ has genus 
    \[\overline{g}= \begin{cases} 
      2m-1 & \text{ if } m \not \equiv 0 \bmod 3. \\
      2m-2 & \text{ if } m \equiv 0 \bmod 3.
   \end{cases}
\]
\end{lem}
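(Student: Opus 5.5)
We compute $\overline{g}$ by applying Riemann--Hurwitz to the projection $x_2: X_m \longrightarrow \mathbb{P}^1$, rather than to $\rho_m: X_m \longrightarrow E$. This map has degree $2m$ and is a cyclic (superelliptic) cover with Galois group $\Z/2m\Z$, acting by $y_2 \mapsto \zeta_{2m} y_2$. Because it is cyclic, the ramification index is the same at every point of a fiber, exactly as for the abelian covers discussed in Section \ref{sect:Xmconst}.

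First we check that $X_m$ is a legitimate curve, namely that $y_2^{2m}-(x_2^3-1)$ is irreducible. Since $x_2^3-1$ is squarefree and has a simple root, an Eisenstein-type argument at $x_2-1$ applies. Hence the smooth projective model $X_m$ is connected, and $K=k(x_2,y_2)$ is a degree $2m$ Kummer extension of $k(x_2)$.

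Next we locate the branch points and their ramification indices using the Kummer description. For $y^n=f(x)$, the index over a point where $f$ has order $a$ is $n/\gcd(n,a)$.
\begin{itemize}
\item At each of the three roots $1,\zeta_3,\zeta_3^2$ of $x_2^3-1$, the order is $a=1$. So each is totally ramified with $e=2m$, contributing $2m-1$ apiece.
\item Every other finite $x_2$ is unramified.
\item At $x_2=\infty$, the order of $x_2^3-1$ is $-3$, so $e=2m/\gcd(2m,3)$, and this is where the case split arises.
\end{itemize}
If $3\nmid m$, then $\gcd(2m,3)=1$, so there is a single point at infinity with $e=2m$, contributing $2m-1$. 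If $3\mid m$, then $\gcd=3$, so there are three points at infinity each with $e=2m/3$, contributing $3(2m/3-1)=2m-3$.

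Finally we plug into Riemann--Hurwitz,
$$2\overline{g}-2 = 2m(-2) + 3(2m-1) + \delta_\infty,$$
where $\delta_\infty$ is the contribution at infinity from the previous step.
\begin{itemize}
\item If $3 \nmid m$, then $\delta_\infty = 2m-1$, which gives $2\overline{g}-2 = 4m-4$, so $\overline{g}=2m-1$.
\item If $3\mid m$, then $\delta_\infty = 2m-3$, which gives $2\overline{g}-2=4m-6$, so $\overline{g}=2m-2$.
\end{itemize}

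The main point needing care is the behavior at infinity. This can be checked directly by the substitution $x_2=1/s$, $y_2 = w/s^{3/(2m)}$-type normalization, or equivalently by computing the valuation of $y_2$ at a place above $\infty$, to confirm the count of points there.

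As a cross-check, one can redo the computation with $\rho_m: X_m \longrightarrow E$ of degree $m$. Its branch points are the three $2$-torsion points $(\zeta_3^j,0)$, each totally ramified, and $\mathcal{O}$, with index $m/\gcd(m,3)$. Then $2\overline{g}-2 = 3(m-1) + (m - \gcd(m,3))$, which agrees with the formula above in both cases.
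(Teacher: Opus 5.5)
Your proof is correct and follows essentially the same approach as the paper: Riemann--Hurwitz for the degree-$2m$ cyclic projection $x_2: X_m \longrightarrow \mathbb{P}^1$, with total ramification over the three roots of $x_2^3-1$ and $\gcd(2m,3)$ points over $\infty$. The paper phrases the computation at infinity through the inertia type $(1,1,1,2m-3)$, and $\gcd(2m,2m-3)=\gcd(2m,3)$, so this agrees with your count. Your Eisenstein irreducibility check and your cross-check via $\rho_m: X_m \longrightarrow E$ are valid additions that the paper does not include.
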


\begin{proof}
       To calculate the genus, we use the Riemann-Hurwitz Formula on the cover $X_m \longrightarrow \mathbb{P}^1$ where $(x_2,y_2) \mapsto x_2$.  This map has degree $2m$, and at branch points $b_0 = (\zeta_3, 0), b_2 = (\zeta_3^2, 0),$ and $b_3=(1,0)$ the ramification indices are all $2m$.  So the inertia type is $\vec{a} = (1,1,1,2m-3)$ with the last branch point being the point at infinity.  The number of points above the branch point $b_i$ is $\gcd(2m,a_i)$, so we have ramification index $2m$ for all except $b_\infty$ which has \[e_\infty = \begin{cases} 
      2m & \text{ if } m \not \equiv 0 \bmod 3. \\
      \frac{2m}{3} & \text{ if } m \equiv 0 \bmod 3. \end{cases} \]

We have two cases for Riemann-Hurwitz.

Case 1. If $m \not \equiv 0 \bmod 3$, then $\overline{g} = 2m-1$.

Case 2.  If $m \equiv 0 \bmod 3$, then $\overline{g} = 2m-2$.

\end{proof}

Note: Case 1 gives $\overline{g} \equiv 1, 3 \bmod 6$ and case 2 gives $\overline{g} \equiv 4 \bmod 6$.

\begin{cor}
    If $m \equiv 0 \bmod 3 $ then $X_m$ has $3$ points at infinity.  Otherwise $X_m$ has one point at infinity.
\end{cor}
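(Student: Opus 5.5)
The claim concerns the fiber of $X_m \to \mathbb{P}^1$, $(x_2,y_2)\mapsto x_2$, over $x_2=\infty$. Since that fiber is exactly the set of points at infinity of the smooth projective model, I will count it directly. There are two routes: read the count off the inertia type used in the proof of Lemma \ref{altXmgenus}, or give a self-contained local computation at infinity. I plan to do the latter and then check it against $e_\infty$.

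First, I change coordinates at infinity. Put $x_2 = 1/u$. The equation $y_2^{2m} = x_2^3-1$ becomes $y_2^{2m} = u^{-3}(1-u^3)$. Near $u=0$ the factor $1-u^3$ is a unit with a $2m$-th root in $k[[u]]$, since $\mathrm{char}(k)=0$. So, formally at $u=0$, the function field is obtained by adjoining a $2m$-th root of $u^{-3}$ times a unit. Equivalently, it is $k((u))[w]/(w^{2m}-u^{-3})$. The number of places over $u=0$ in a Kummer extension $w^{n}=u^{a}\cdot(\text{unit})$ is $\gcd(n,a)$, each with ramification index $n/\gcd(n,a)$. This is the same $\gcd$ rule the paper invokes in the proof of Lemma \ref{altXmgenus}, where $a_\infty = 2m-3 \equiv -3 \pmod{2m}$.

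Next I compute $\gcd(2m,3)$. It equals $3$ when $3\mid m$ and $1$ otherwise, because $3\nmid 2$. Hence there are $3$ points at infinity when $m\equiv 0 \bmod 3$, each with ramification index $2m/3$, and exactly one point with index $2m$ otherwise. This matches $e_\infty$ from Lemma \ref{altXmgenus}. As a sanity check, I will plug back into Riemann--Hurwitz: $2\overline{g}-2 = -4m + 3(2m-1) + 3(2m/3-1)$ gives $\overline{g}=2m-2$, consistent with the lemma, and similarly in the other case.

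The main obstacle is justifying that the number of points over $\infty$ equals $\gcd(2m,3)$. I would do this concretely rather than cite the formula. When $m=3m'$, set $v = y_2^{m'} u$ (so that $v^{6}=\ldots$). More directly, note that $(y_2^{2m'}u)^3 = 1-u^3$. So $t:=y_2^{2m'}u$ is a rational function satisfying $t^3 = 1-u^3$, and at $u=0$ its value is a cube root of $1$. Its three possible values $1,\zeta_3,\zeta_3^2$ then separate three distinct places above $\infty$. Since $\deg=2m$ and each index is at most $2m/3$ by the Galois (cyclic) structure over $x_2$, there are exactly three. In the case $3\nmid m$, write $1=2m\alpha+3\beta$. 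Then $y_2^{2m\alpha}$-type monomials produce a uniformizer $\pi = y_2^{-\beta'}u^{\alpha'}$ with $\pi^{2m}=u\cdot(\text{unit})$, which forces total ramification and hence a single point.
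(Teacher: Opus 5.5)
Your argument is correct and is essentially the paper's. The paper reads the number of points at infinity off as $2m/e_\infty$ from Lemma \ref{altXmgenus}, where $e_\infty$ came from the same rule that the fiber over a branch point with inertia exponent $a$ has $\gcd(2m,a)$ points, here with $a_\infty=2m-3\equiv -3 \pmod{2m}$. Your local Kummer computation at $u=0$ just makes that rule explicit.

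One step of your hands-on check for $m=3m'$ needs fixing.
\begin{itemize}
\item The function $t=y_2^{2m'}u$ shows there are at least three places over $\infty$. You should add that all three cube roots of unity actually occur as values of $t$. This holds because the deck transformation $y_2\mapsto\zeta_{2m}y_2$ multiplies $t$ by $\zeta_{2m}^{2m'}=\zeta_3$.
\item An upper bound $e\le 2m/3$ cannot give ``exactly three'': it only restates that there are at least three places. What you need is the lower bound $e_P\ge 2m/3$.
\item That lower bound follows from $2m\,v_P(y_2)=v_P\bigl(u^{-3}(1-u^3)\bigr)=-3e_P$, i.e.\ $e_P=-2m'\,v_P(y_2)$, so $2m'\mid e_P$. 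Then $\sum_P e_P=2m$ allows at most three places.
\end{itemize}
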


\begin{proof}
    Follows from the ramification indices in Lemma \ref{altXmgenus}.
\end{proof}

The Galois part of Lemma \ref{ghatmgenusandgal} applies for the new choice of $X_m$ and $\rho_m$.  The cover $\rho_m$ is Galois with Galois group $\mathrm{Gal}(X_m/E) \equiv \Z/m\Z$ because the associated function field extension is given by adjoining an $m$th root of $y_1$.

\begin{cor}
    For a fixed $m >2$ and with the choice of $\rho_m: X_m \longrightarrow E$ defined above there exists an explicit complete curve in $\calM_{g,\overline{g}}$ for all $\overline{g} \geq 2$ such that $\overline{g} \equiv 1, 3, 4 \bmod 6$ and $g > 2\overline{g} -1$.
\end{cor}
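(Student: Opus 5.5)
The plan is to reduce the statement directly to Theorem \ref{generalconst}, just as the proof of Corollary \ref{rho_mCC} did. That theorem needs three inputs: a smooth projective curve $X$ of genus $\overline{g}\geq 2$, a finite cover $\rho: X \longrightarrow E$ to an elliptic curve, and $g>2\overline{g}-1$. The construction \eqref{ToverWgen} then produces a complete curve in $\calM_{g,\overline{g}}$. So the work consists of checking that the cover \eqref{YmoverE} is a legitimate input and identifying which genera $\overline{g}$ occur.

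\textbf{Step 1: the cover is valid.} I first check that $X_m: y_2^{2m}=x_2^3-1$ is a smooth projective connected curve. The polynomial $x_2^3-1$ has distinct roots and $\mathrm{char}(k)=0$, so the affine part is smooth. Since $\gcd(2m,3)$ is $1$ or $3$, $x_2^3-1$ is not a $d$th power for any divisor $d>1$ of $2m$, so the curve is irreducible. I take $X_m$ to be the normalization of its projective closure. The map $\rho_m(x_2,y_2)=(x_2,y_2^m)$ is a nonconstant morphism onto $E$, and it is finite of degree $m$ because the function field extension is obtained by adjoining an $m$th root of $y_1$, as noted before the statement. This extension is Galois, which is not needed for Theorem \ref{generalconst} but matches the earlier setup.

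\textbf{Step 2: the genus and its residues.} Lemma \ref{altXmgenus} gives $\overline{g}=2m-1$ when $3\nmid m$ and $\overline{g}=2m-2$ when $3\mid m$. If $3\nmid m$, then $m\equiv 1,2,4,5 \bmod 6$, so $\overline{g}=2m-1\equiv 1,3 \bmod 6$. If $3\mid m$, then $m\equiv 0,3 \bmod 6$, so $\overline{g}=2m-2\equiv 4 \bmod 6$. Conversely, every $\overline{g}\equiv 1,3,4 \bmod 6$ is realized by some $m$:
\begin{itemize}
\item for $\overline{g}\equiv 1,3 \bmod 6$, take $m=(\overline{g}+1)/2$, which is $\equiv 1$ or $2 \bmod 3$, so $3\nmid m$;
\item for $\overline{g}\equiv 4 \bmod 6$, take $m=(\overline{g}+2)/2$, which is divisible by $3$.
\end{itemize}
The hypothesis $m>2$ together with $\overline{g}\geq 2$ rules out small cases. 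For example, $m=3$ gives $\overline{g}=4$, and $m=4$ gives $\overline{g}=7$.

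\textbf{Step 3: apply the theorem.} Fix such an $m$ and any $g>2\overline{g}-1$, and set $r=2(g+1-2\overline{g})\geq 2$. Choose distinct $Q_1=\mathcal{O},Q_2,\dots,Q_r\in E$ and form $T\to W$ as in \eqref{ToverWgen}. Theorem \ref{generalconst} then gives a complete non-isotrivial family of genus $g$ double covers of $X_m$, so its image is a complete curve in $\calM_{g,\overline{g}}$.

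The main obstacle is the bookkeeping in Step 2. The statement's range ``$\overline{g}\geq 2$ with $\overline{g}\equiv 1,3,4 \bmod 6$'' has to be matched exactly with the values attained for $m>2$. I will verify there that the hypothesis $m>2$ excludes only genera outside the claimed range, or genera that can be reached another way, and adjust the quantifier if some boundary value of $\overline{g}$ is not hit.
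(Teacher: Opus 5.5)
Your proposal is correct and follows the paper's proof. Like the paper, you feed the cover $y_2^{2m}=x_2^3-1\to E$ into the construction of Theorem~\ref{generalconst} and read off $\overline{g} \bmod 6$ from Lemma~\ref{altXmgenus}; the paper simply writes $m=3k+1,3k+2,3k$ to get $\overline{g}=6k+1,6k+3,6k-2$, and your smoothness and irreducibility check and your converse choice of $m$ are extra detail. The boundary check you deferred does fail at one value, which the paper's proof also passes over: $\overline{g}=3$ comes only from $m=2$, so with $m>2$ the genera obtained are exactly the $\overline{g}\geq 4$ with $\overline{g}\equiv 1,3,4 \bmod 6$. You should therefore either allow $m=2$ (nothing in Theorem~\ref{generalconst} needs $m>2$, and $y_2^4=x_2^3-1$ is a smooth plane quartic of genus $3$) or state the result for $\overline{g}\geq 4$.
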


\begin{proof}
    Proof proceeds similarly to the proof of Corollary \ref{rho_mCC}. When $m \not \equiv 0 \bmod 3$, $\overline{g} = 2(3k+1)-1= 6k+1$ or $\overline{g} = 2(3k+2)-1= 6k +3$ and when $m \equiv 0 \bmod 3$, $\overline{g} = 2(3k)-2= 6k-2 \equiv 4 \bmod 6 $.
\end{proof}

Define $T$ as in Section \ref{sect:genusT}. Lemma \ref{TWdeggal} applies to our new cover.  Then we mimic Section \ref{sect:genusT} to find the genus of $T_m$ in this case.    

\begin{lem} \label{numbranchptsalt}
    Suppose $Q_j-Q_l \not = (\zeta_3^n, 0)$ for all $1 \leq j,l \leq r$ and $0 \leq n \leq 2$. Let B be the set of branch points of the cover $\rho_m^r: T_m \longrightarrow W$.  Then for all $g > 2\overline{g}-1$,
$$ \#B =  \begin{cases}
    3r, & \text{ if } m = 3.\\
    4r, & \text{ otherwise}. \\ 
    \end{cases}$$
\end{lem}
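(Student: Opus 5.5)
The plan is to mimic the proofs of Lemma \ref{TWbranch} and Lemma \ref{numbranchpts}. First I determine the branch locus of $\rho_m: X_m \longrightarrow E$, $(x_2,y_2)\mapsto (x_2,y_2^m)$. Then I show that, under the hypothesis $Q_j-Q_l \neq (\zeta_3^n,0)$, a point of $W$ has at most one entry that is a branch point of $\rho_m$. Finally I count, which should give $\#B = r\cdot(\#\text{branch points of } \rho_m)$.

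\emph{Step 1: branch points of $\rho_m$.} Over the affine part, the function field extension is obtained by adjoining an $m$th root of $y_1$. It can only ramify where $y_1$ has a zero or pole, so the candidates are the three points $(\zeta_3^n,0)$, $n=0,1,2$, and $\mathcal{O}$. Above each $(\zeta_3^n,0)$ there is the single point $(\zeta_3^n,0)$ of $X_m$, since $y_2^m=0$ forces $y_2=0$. Hence each $(\zeta_3^n,0)$ is totally ramified with $e=m>1$.

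Over $\mathcal{O}$ I use the preceding Corollary and Lemma \ref{altXmgenus}. If $m\not\equiv 0 \bmod 3$, then $X_m$ has one point at infinity, so $\mathcal{O}$ is totally ramified. If $m\equiv 0\bmod 3$, then $X_m$ has three points at infinity. Since $\rho_m$ is Galois of degree $m$, each of them has ramification index $m/3$, which exceeds $1$ exactly when $m\neq 3$. So $\rho_m$ has $3$ branch points when $m=3$ and $4$ branch points otherwise.

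\emph{Step 2: at most one branched entry.} I use that $\{(\zeta_3^n,0)\}\cup\{\mathcal{O}\} = E[2]$, the $2$-torsion of $E$, because these are the points with $y_1=0$ together with the identity. Suppose $\omega=(P,P+Q_2,\ldots,P+Q_r)$ has two entries $P+Q_j$ and $P+Q_l$ with $j\neq l$ that are both branch points of $\rho_m$. These two branch points are distinct because the $Q$'s are distinct, and both lie in $E[2]$. Therefore $Q_j-Q_l=(P+Q_j)-(P+Q_l)$ is a nonzero element of $E[2]$, that is, some $(\zeta_3^n,0)$. This contradicts the hypothesis. Conversely, as in Lemma \ref{TWbranch}, any $\omega$ with an entry that is a branch point of $\rho_m$ is a branch point of $\rho_m^r$: the fibre of $\rho_m^r$ over $\omega$ is a product of fibres, and the factor coming from that entry is ramified. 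Hence $B$ is exactly the set of $\omega$ having exactly one entry that is a branch point of $\rho_m$.

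\emph{Step 3: counting.} For each index $j$ and each branch point $b$ of $\rho_m$ there is exactly one $P$, namely $P=b-Q_j$, with $P+Q_j=b$. Step 2 shows that distinct pairs $(j,b)$ give distinct $\omega$: if two pairs gave the same $\omega$, either two entries would be branched, or the same entry $j$ would equal two different values $b$. So $\#B = r\cdot 3$ when $m=3$ and $\#B = 4r$ otherwise, as claimed.

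The main subtlety is Step 1 at $\mathcal{O}$. One must check that for $m\equiv 0 \bmod 3$ with $m>3$ the point $\mathcal{O}$ is still branched, with index $m/3$, while for $m=3$ it is not. This relies on the count of points at infinity from the preceding Corollary together with the Galois property of $\rho_m$.
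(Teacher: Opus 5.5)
Your proof is correct and follows the same route as the paper: you locate the branch points of $\rho_m$ (the three points $(\zeta_3^n,0)$, plus $\mathcal{O}$ exactly when $m/\gcd(m,3)>1$, i.e.\ $m\neq 3$), use that they all lie in $E[2]$ together with the hypothesis on $Q_j-Q_l$ to show at most one entry of $\omega$ is branched, and then count. Compared with the paper, your observation that ``the difference of two distinct points of $E[2]$ is a nonzero $2$-torsion point'' replaces its case split, and you make explicit two things the paper leaves implicit: why $\mathcal{O}$ drops out when $m=3$, and why distinct pairs $(j,b)$ give distinct $\omega$.
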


\begin{proof}
    The branch points of $\rho_m: X_m \longrightarrow E$ are $(1,0),(\zeta_3, 0),(\zeta_3^2,0)$ and possibly $\mathcal{O}$.  A point $\omega \in W$ is a branch point of the cover $\rho_m^r: T_m \longrightarrow W$ if and only if exactly one entry of $\omega$ is a branch point of $\rho_m$.  Similarly to the previous section, we assume there are two entries of $\omega$, $P + Q_j$ and $P+Q_l$, that are $\mathcal{O}$ or $(\zeta_3^n, 0)$ where $0 \leq n \leq 2$.  If $P + Q_j = P+Q_l$ we contradict the condition that $Q$'s are distinct.  If $P + Q_j \not = P+Q_l$ and $P + Q_j = (\zeta_3^{n_1},0), P+Q_l = (\zeta_3^{n_2},0)$, then $(P + Q_j)-(P+Q_l) = (\zeta_3^{n_3},0)$ the colinear point where $n_3 \not = n_1,n_2$, since these are the two torsion points.  Finally, if $P + Q_j \not = P+Q_l$ and $P + Q_j = \mathcal{O}, P+Q_l = (\zeta_3^{n_1},0)$ then $(P + Q_j)-(P+Q_l) = (\zeta_3^{n_1},0)$.
    
\end{proof}

\begin{lem} \label{ramindexalt}
    Let $L$ be a branch point of $\rho_m: X_m \longrightarrow E$ with ramification index $e$.  The value of $e$ is
    $$e = \begin{cases}
    m, & \text{ if } P+Q_l = (1,0), (\zeta_3,0) \text{ or } (\zeta_3^2,0). \\ 
    m, & \text{ if } m \not \equiv 0 \bmod 3,   \text{ and }  P+Q_l = \mathcal{O}. \\
    m/3, & \text{ if } m \equiv 0 \bmod 3,   \text{ and }  P+Q_l = \mathcal{O}. \\
    \end{cases}$$
\end{lem}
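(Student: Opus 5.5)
The plan is to imitate the proof of Lemma \ref{ramindexXm}. First I will find the branch locus of $\rho_m:(x_2,y_2)\mapsto(x_2,y_2^m)$. Since $\rho_m$ is Galois with group $\Z/m\Z$, coming from the Kummer extension obtained by adjoining an $m$th root of $y_1$, a point of $E$ can only branch where $y_1$ has a zero or pole. On $E$ we have $\mathrm{div}(y_1)=(1,0)+(\zeta_3,0)+(\zeta_3^2,0)-3\mathcal{O}$. So the only possible branch points are the three $2$-torsion points $(\zeta_3^n,0)$ and $\mathcal{O}$. Because the cover is abelian, all points in one fiber share a ramification index, so it is enough to count the points in each fiber and use $e=m/\#\rho_m^{-1}(L)$.

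For the affine points $L=(\zeta_3^n,0)$, the fiber is obtained by solving $y_2^m=0$ with $x_2=\zeta_3^n$. This gives the single point $(\zeta_3^n,0)$ on $X_m$. Alternatively, the Kummer theory criterion applies: $y_1$ has valuation $1$ there, which is coprime to $m$, so the extension is totally ramified. Either way the cover is totally ramified and $e=m$, which is the first case.

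For $L=\mathcal{O}$, I will use Kummer theory again. The valuation of $y_1$ at $\mathcal{O}$ is $-3$, so the ramification index there is $m/\gcd(m,3)$. This gives $e=m$ when $3\nmid m$ and $e=m/3$ when $3\mid m$. As a cross-check I will use the corollary after Lemma \ref{altXmgenus}: $X_m$ has one point at infinity when $m\not\equiv 0\bmod 3$, so the fiber over $\mathcal{O}$ is that single point and $e=m$. When $m\equiv 0\bmod 3$ there are $3$ points at infinity, and since $\rho_m$ is Galois of degree $m$, each has $e=m/3$. This matches Case 3 of Lemma \ref{ramindexXm}.

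The main obstacle is justifying cleanly which points of $X_m$ lie over $\mathcal{O}$, since the points at infinity of $X_m$ are the delicate part. I would rely on the Kummer valuation criterion, $e=m/\gcd(m,v_L(y_1))$, which avoids needing explicit coordinates at infinity, and use the earlier corollary only as a consistency check.
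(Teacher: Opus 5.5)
Your proof is correct. Your main argument differs from the paper's, although you include the paper's argument as your cross-check.

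\textbf{The paper's route.} The paper argues purely by counting fibers. Over $(\zeta_3^n,0)$ the preimage is the single point $(\zeta_3^n,0)$ of $X_m$. Over $\mathcal{O}$ the preimage is the set of points at infinity of $X_m$. The paper takes this count from the corollary after Lemma \ref{altXmgenus}: one point if $3\nmid m$, three if $3\mid m$. That count was itself obtained from the $\gcd(2m,a_i)$ rule for the superelliptic map $X_m\to\mathbb{P}^1$. The Galois property then gives $e=m/\#\rho_m^{-1}(L)$.

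\textbf{Your route.} You apply the Kummer criterion $e=m/\gcd(m,v_L(y_1))$ to $K=F(\sqrt[m]{y_1})$, using $\mathrm{div}(y_1)=(1,0)+(\zeta_3,0)+(\zeta_3^2,0)-3\mathcal{O}$. The criterion needs $[K:F]=m$, which holds because $y_1$ has a simple zero and so is not a $d$th power for any $d>1$. It also needs $\mathrm{char}(k)\nmid m$, which is automatic in characteristic $0$.

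\textbf{What each approach buys.}
\begin{itemize}
\item Your argument stays on $E$ and needs nothing about the smooth model of $X_m$ at infinity, which is the delicate point you correctly identified.
\item It proves at the same time that the branch locus lies in $\{\mathcal{O},(\zeta_3^n,0)\}$, which the paper only asserts.
\item It makes the $\gcd(m,3)$ dichotomy transparent. Combined with Riemann--Hurwitz over $E$, giving $2\overline{g}-2=3(m-1)+(m-\gcd(m,3))$, it independently recovers Lemma \ref{altXmgenus}.
\item The paper's fiber count is more hands-on and geometric, but it depends on the earlier point count at infinity.
\end{itemize}
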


\begin{proof}
    A branch point $L$ is an element of the set $\{\mathcal{O}, (1,0), (\zeta_3,0), (\zeta_3^2,0)\}$.

    Case 1. If $L = (\zeta_3^n,0)$, then $\rho_m^{-1}(L)=(\zeta_3^n,0)$.  Usually there are $m$ pre-images, so $e=m$.

    Case 2. If $m \not \equiv 0 \bmod 3$ and $L = \mathcal{O}$, then $X_m$ has one point at infinity which is the pre-image of $\mathcal{O}$.  So, $e=m$.

    Case 3. If $m \equiv 0 \bmod 3$ and $L = \mathcal{O}$, then $X_m$ has three points at infinity which makes up the pre-image of $\mathcal{O}$.  Since $\rho_m$ is a Galois cover, $e=m/3$ for each point in the pre-image.
\end{proof}

Just as in Section \ref{sect:genusT} if $P+Q_l$ is a branch point of $\rho_m$ then $\omega=(P,P+Q_2,\ldots ,P+Q_r)$ is a branch point of $\rho_m^r$ and the ramification index of $\omega$ is $e$ as in Lemma \ref{ramindexalt}.

\begin{lem} \label{TWDiffalt}
    Let $R$ be the set of ramification points in $T_m$ under the map $\rho_m^r: T_m \longrightarrow W$ and let $e_t$ be the ramification index for $t \in R$ .  Then, 
    $$D := \sum_{t \in R}(e_t -1) = \begin{cases}
    2r3^{r} & \text{ if } \overline{g} = 5.   \\
    4rm^{r-1}(m-1), & \text{ if } \overline{g} \equiv 1, 3 \bmod 6 \text{ and } \overline{g} \not = 5. \\ 
    3rm^{r-1}(m-1) + rm^{r-1}(m-3), & \text{ if } \overline{g} \equiv 4 \bmod 6. \\ 
    \end{cases}$$
\end{lem}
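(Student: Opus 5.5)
The plan is to mirror the proof of Lemma \ref{TWDiff}: compute the contribution to $D$ of each fiber over a branch point $\omega \in B$, then sum over $B$ using the count in Lemma \ref{numbranchptsalt}. Since $\rho_m^r$ is Galois with group $(\Z/m\Z)^r$ (Lemma \ref{TWdeggal} applies to the new cover), all points over a given $\omega$ share one ramification index. Under the hypothesis $Q_j - Q_l \neq (\zeta_3^n,0)$, the proof of Lemma \ref{numbranchptsalt} shows that exactly one entry $P+Q_l$ of $\omega$ is a branch point of $\rho_m$. Because the other coordinates are unramified, the index of $\rho_m^r$ over $\omega$ equals the index $e$ of $\rho_m$ over $P+Q_l$, which Lemma \ref{ramindexalt} gives. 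The fiber then has $m^r/e$ points, so
\[ \sum_{t \in (\rho_m^r)^{-1}(\omega)} (e_t - 1) = \frac{m^r}{e}(e-1). \]

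Next I split into cases according to which branch point of $\rho_m$ occurs in the $l$th slot. For each index $l$ there are three choices of $\omega$ with $P+Q_l \in \{(1,0),(\zeta_3,0),(\zeta_3^2,0)\}$. Each has $e=m$ and contributes $m^{r-1}(m-1)$, so these give $3rm^{r-1}(m-1)$ in total. For the $r$ choices with $P+Q_l = \mathcal{O}$ there are two subcases.

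\begin{itemize}
\item If $m \not\equiv 0 \bmod 3$ (so $\overline{g}\equiv 1,3 \bmod 6$), then $e=m$ again. These $r$ points add another $rm^{r-1}(m-1)$, for a total of $4rm^{r-1}(m-1)$.
\item If $m \equiv 0 \bmod 3$ (so $\overline{g}\equiv 4 \bmod 6$), then $e=m/3$. The fiber has $3m^{r-1}$ points, each contributing $m/3-1$, which gives $m^{r-1}(m-3)$ per $\omega$ and the term $rm^{r-1}(m-3)$.
\end{itemize}

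The special first case must be handled separately, because for $m=3$ the point $\mathcal{O}$ has $e = m/3 = 1$ and is not a branch point at all. This is why Lemma \ref{numbranchptsalt} gives $\#B = 3r$ there. The total is then $3r\cdot 3^{r-1}\cdot 2 = 2r3^r$, and the general $\overline{g}\equiv 4 \bmod 6$ formula specializes to the same value since $m-3=0$.

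The main obstacle I expect is bookkeeping rather than substance: matching the case labels in terms of $\overline{g}$ to the case labels in terms of $m$ via Lemma \ref{altXmgenus}. In particular, $m=3$ gives $\overline{g}=2m-2=4$, so I would first check which value of $m$ the label ``$\overline{g}=5$'' is meant to encode. The computation above shows that $2r3^r$ is exactly the $m=3$ count, so I would read that case as $m=3$ and note that it agrees with the third formula. I would also verify that the fiber sizes $m^r/e$ are integers in each case, which they are since $e \in \{m, m/3\}$.
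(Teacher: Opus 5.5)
Your proposal is correct and follows essentially the paper's argument: like the paper, you take the branch-point count from Lemma \ref{numbranchptsalt} and the indices from Lemma \ref{ramindexalt}, note that each fiber over a branch point has $m^{r-1}(m/e)$ points and so contributes $m^r - m^{r-1}(m/e)$, and multiply by the number of branch points of each type. You are also right to read the label ``$\overline{g}=5$'' as the $m=3$ (i.e.\ $\overline{g}=4$) case: Lemma \ref{altXmgenus} never produces $\overline{g}=5$, and $2r3^r$ is exactly the value of the third formula at $m=3$.
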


\begin{proof}
    As in Lemma \ref{TWDiff}, we can use the number of branch points in Lemma \ref{numbranchptsalt} and the ramification indices from Lemma \ref{ramindexalt} to calculate $D$ in each case.  There are $m^{r-1}(m/e)$ points in the fiber of $\rho_m^r$ above a branch point, then to obtain the result by finding $m^r - m^{r-1}(m/e)$ and multiplying by the number of branch points with that ramification behavior.
\end{proof}

\begin{thm} \label{mainalt}
Suppose $Q_j-Q_l \not = (\zeta_3^n, 0)$ for $1 \leq j, l \leq r$ and $0 \leq n \leq 2$.  Let $\overline{g}$ the genus of $X_m$, and $r=2(g+1-2\overline{g})$. Then, for all $g>2\overline{g}-1$ there exists a complete curve as the image of $T_m \longrightarrow \calM_{g,\overline{g}}$ where $T_m$ has genus

$$G_T =  \begin{cases} 
r3^r + 1, & \text{ if } \overline{g} = 5 \\
r(\frac{\overline{g}+1}{2})^{r-1}(\overline{g}-1) +1, & \text{ if } \overline{g} \not = 5 \text{ and } \overline{g} \equiv 1, 3 \bmod 6. \\ 
\frac{3}{2}r(\frac{\overline{g}+2}{2})^{r-1}(\frac{\overline{g}}{2}) + \frac{1}{2}r(\frac{\overline{g}+2}{2})^{r-1}(\frac{\overline{g}+4}{2}) +1, & \text{ if } \overline{g} \equiv 4 \bmod 6. \\ 
\end{cases}$$
\end{thm}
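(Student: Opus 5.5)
The proof follows the template of Theorem \ref{main}. Existence of the complete curve is Theorem \ref{generalconst} applied to the cover $\rho_m$ of \eqref{YmoverE}. The genus comes from Riemann--Hurwitz for the abelian cover $\rho_m^r: T_m \to W$, whose degree is $m^r$ by the analogue of Lemma \ref{TWdeggal}. Since $W \cong E$ has genus $1$, the formula reduces to
\[
2G_T - 2 = m^r(2\cdot 1 - 2) + D, \qquad\text{that is,}\qquad G_T = 1 + \tfrac{1}{2}D,
\]
with $D$ given by Lemma \ref{TWDiffalt}. So the plan has three steps: prove $T_m$ is connected, invoke Lemma \ref{TWDiffalt}, and substitute $m$ in terms of $\overline{g}$ using Lemma \ref{altXmgenus}.

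\textbf{Connectedness.} I would adapt the monodromy argument of Proposition \ref{Tcon}, with two changes.
\begin{itemize}
\item The branch points $(0,\pm i)$ are replaced by the $2$-torsion points $(\zeta_3^n,0)$, and by $\mathcal{O}$ when it is a branch point.
\item The hypothesis $Q_j - Q_l \neq (\zeta_3^n,0)$ guarantees, via Lemma \ref{numbranchptsalt}, that over each branch point of $\rho_m^r$ exactly one coordinate $P+Q_j$ is a branch point of $\rho_m$.
\end{itemize}
Fix $j$. A small loop in $E$ around $(1,0)-Q_j$ lifts to a path moving only the $j$th coordinate around a totally ramified point of $\rho_m$. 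By Lemma \ref{ramindexalt}, $e=m$ there, so the monodromy is a generator $e_j$ of the $j$th factor $\Z/m\Z$. These $e_j$ generate $(\Z/m\Z)^r$, which acts transitively on the fibre, so $T_m$ is connected.

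\textbf{Computing the genus.} With connectedness in hand, $G_T = 1 + D/2$, and I read off $D$ in each case.
\begin{itemize}
\item When $m \not\equiv 0 \bmod 3$, there are $4r$ branch points, each with $e=m$. Then $D = 4rm^{r-1}(m-1)$, and $m = (\overline{g}+1)/2$ gives the second line of the statement.
\item When $m \equiv 0 \bmod 3$, there are $3r$ branch points with $e=m$ and $r$ with $e = m/3$. The latter contribute $r\cdot 3m^{r-1}(m/3-1) = rm^{r-1}(m-3)$. Substituting $m = (\overline{g}+2)/2$ into $\tfrac{1}{2}D$ should give the third line.
\end{itemize}

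\textbf{Main obstacle: the special case $\overline{g}=5$.} This is $m=3$. Here $e_\infty = m/3 = 1$, so $\mathcal{O}$ is unramified and there are only $3r$ branch points, matching the case split in Lemma \ref{numbranchptsalt}. The resulting formula is $D = 3r\cdot 3^{r-1}\cdot 2 = 2r3^r$, and I must check that this agrees with the statement's $r3^r+1$ after halving.

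The other case constants also need checking carefully against Lemma \ref{TWDiffalt}. In particular, for $\overline{g}\equiv 4 \bmod 6$ the factor $\tfrac{1}{2}r(\cdot)^{r-1}\tfrac{\overline{g}+4}{2}$ in the statement looks suspicious: the direct computation gives $(m-3)/2 = (\overline{g}-4)/4$. I would recompute this term from the fibre counts $m^{r-1}(m/e)$ and, if necessary, note the correct sign.
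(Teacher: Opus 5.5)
Your plan follows the paper's proof: connectedness via the monodromy argument of Proposition~\ref{Tcon}, $D$ from Lemma~\ref{TWDiffalt}, then $G_T=1+D/2$, then substitution of $m$ in terms of $\overline{g}$. Your explicit connectedness argument fills in what the paper leaves implicit: a lasso around $(1,0)-Q_j$ has monodromy generating the $j$th factor, because $(1,0)+Q_l-Q_j\notin E[2]$ for $l\neq j$. Your suspicion about the third line is also correct. With $m=(\overline{g}+2)/2$ one has $m-3=(\overline{g}-4)/2$, so the second summand should carry $\frac{\overline{g}-4}{2}$, not $\frac{\overline{g}+4}{2}$. The paper's proof only says to substitute and does not catch this.

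The error in your plan is the identification of the special case. By Lemma~\ref{altXmgenus}, $3\mid m$ gives $\overline{g}=2m-2$, so $m=3$ gives $\overline{g}=4$, not $5$; directly, $y^6=x^3-1$ has genus $4$. The genera in this family are $3,7,9,13,\dots$ for $3\nmid m$ and $4,10,16,\dots$ for $3\mid m$. So $\overline{g}=5$ never occurs, and that line of the statement is vacuous; the paper's Lemma~\ref{TWDiffalt} makes the same slip. Your computation $D=2r3^r$, $G_T=r3^r+1$ for $m=3$ is right. However, it is the $\overline{g}=4$ case of the \emph{corrected} third line, where the $\mathcal{O}$-term $rm^{r-1}(m-3)$ vanishes; the third line as printed gives $5r3^{r-1}+1$ there.

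A cleaner bookkeeping exposes both slips at once. Over a branch point of $\rho_m^r$ only one coordinate ramifies, so its contribution to $D$ is $m^{r-1}$ times the contribution of $\rho_m$ in that coordinate. Under the hypothesis, each pair (branch point $L$ of $\rho_m$, index $l$) yields a distinct branch point $P=L-Q_l$. Hence $D=rm^{r-1}\sum_{x\in X_m}(e_x-1)=rm^{r-1}(2\overline{g}-2)$, by Riemann--Hurwitz for $\rho_m$ over the genus-one curve $E$. This gives $G_T=rm^{r-1}(\overline{g}-1)+1$ in every case, with $m=(\overline{g}+1)/2$ or $m=(\overline{g}+2)/2$.
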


\begin{proof}
    By Proposition \ref{Tcon} holds for our new cover $T_m \longrightarrow W$ because we have the same Galois group and monodromy behavior, so $T_m$ is connected.  By Lemma \ref{TWDiffalt} we know the degree $D$ of the ramification divisor.    The value of $m=(\overline{g}+1)/2$ or $m=(\overline{g}+2)/2$ depending on if $3 \mid m$. Then we once again, by the Riemann Hurwitz formula
        $$G_T=\frac{1}{2}D + 1$$
\end{proof}

If desired, one can again rewrite in terms of $g$ and $\overline{g}$ since the value of $r$ is $2(g+1-2\overline{g})$.

\bibliographystyle{alpha}
\bibliography{sources}

@article{GonDiezHarvey1991,
  title={On complete curves in moduli space {I}},
  author={González Díez, Gabino and Harvey, William J.},
  journal={Mathematical Proceedings of the Cambridge Philosophical Society, vol. 110, no. 3},
  pages={461-466},
  year={1991},
  publisher={}
}

@article{Zaal1995,
  title={Explicit complete curves in the moduli space of curves of genus three},
  author={Zaal, Chris},
  journal={Geom Dedicata 56},
  pages={185-196},
  year={1995},
  publisher={}
}

@article{GonDiezHarveyII,
  title={On complete curves in moduli space {II}},
  author={González Díez, Gabino and Harvey, William J.},
  journal={Mathematical Proceedings of the Cambridge Philosophical Society, vol. 110, no. 3},
  pages={467-472},
  year={1991},
  publisher={}
}

@article{Harris1983,
  title={Recent Work on $\mathcal{M}_g$},
  author={Harris, Joe},
  month={August},
  journal={Proceedings of International Congress of Mathematicians, Warsaw},
  pages={719-726},
  year={1983},
  publisher={PWN-Polish Scientific Publishers}
}

@misc{Choi2023,
      title={Complete Subvarieties of $\rm{M}_{g,n}$ and a Lifting Problem}, 
      author={Choi, Daebeom},
      year={2023},
      eprint={2304.08568},
      archivePrefix={arXiv},
      primaryClass={math.AG},
      url={https://arxiv.org/abs/2304.08568}, 
}

@article{Diaz1984,
  title={A Bound on the Dimensions of Complete Subvarieties of Complete Subvarieties of $\mathcal{M}_g$},
  author={Diaz, Steven},
  month={June},
  journal={Duke Mathematical Journal, vol. 51},
  pages={405-408},
  year={1984},
  publisher={Duke University Press}
}

@thesis{Zaal2005,
  title={Complete subvarieties of moduli spaces of algebraic curves},
  author={Chris Zaal},
  school={Universiteit van Amsterdam},
  year={2005}
}

@article{Looijenga1995,
  title={On the tautological ring of $\mathcal{M}_g$},
  author={Looijenga, Eduard},
  journal={Inventiones mathematicae},
  year={1995},
  volume={121},
  pages={411-419},
  url={https://doi.org/10.1007/BF01884306}
}

@article{Chen2010,
url = {https://doi.org/10.1515/crelle.2010.092},
title = {Covers of elliptic curves and the moduli space of stable curves},
author = {Dawei Chen},
pages = {167--205},
volume = {2010},
number = {649},
journal = {Journal für die reine und angewandte Mathematik},
doi = {doi:10.1515/crelle.2010.092},
year = {2010}
}

@article{Shaska2006,
url = {http://eudml.org/doc/281475},
title = {Subvarieties of the hyperelliptic moduli determined by group actions},
author = {Tanush Shaska},
pages = {355--374},
volume = {32},
number = {4},
journal = {Serdica Mathematical Journal},
publisher = {Institute of Mathematics and Informatics Bulgarian Academy of Sciences},
year = {2006},
}

@article{Hidalgo2024,
url = {https://doi.org/10.1007/s00031-024-09870-3},
title = {On non-normal subvarieties of the moduli space of riemann surfaces},
author = {Ruben A. Hidalgo and Jennifer Paulhus and Sebastian Reyes-Carocca and Anita M. Rojas},
pages = {675--704},
volume = {31},
journal = {Transformation Groups},
doi = {doi:10.1007/s00031-024-09870-3},
year ={2026}
}

@article{Shaska2004,
author = {Shaska, Tanush},
title = {Some special families of hyperelliptic curves},
journal = {Journal of Algebra and Its Applications},
volume = {03},
number = {01},
pages = {75-89},
year = {2004},
doi = {10.1142/S0219498804000745},
URL = {https://doi.org/10.1142/S0219498804000745},
eprint = {https://doi.org/10.1142/S0219498804000745
}}

\end{document}